\documentclass{article}
\usepackage{setspace} % Allows the use of \toprule, \midrule and \bottomrule for better rules in tables
% \usepackage{biblatex}
% \addbibresource{references.bib}
% \usepackage{threeparttable}
% \usepackage{multirow}
% \usepackage{booktabs}       % For professional table formatting
% \usepackage{pdflscape}
\usepackage{booktabs}
\usepackage{longtable}
\usepackage{adjustbox}
\usepackage{geometry}
\usepackage{rotating} % For sideways tables
\geometry{a4paper, margin=1in}
\usepackage[english]{babel} % or polyglossia
\usepackage{csquotes}
\usepackage[backend=biber]{biblatex}
\addbibresource{references.bib}
\usepackage{amsthm} % Include this package
\usepackage{amsmath, amssymb} % Already included, needed for math symbols
\newcommand{\R}{\mathbb{R}} % Define \R as shorthand for \mathbb{R}
 
\newcommand{\keywords}[1]{\textbf{Keywords: }#1}
\newtheorem{lemma}{Lemma}

\theoremstyle{remark}
\newtheorem{remark}{Remark}
\newtheorem{theorem}{Theorem}
\usepackage{algorithm}      % For algorithm environments
\usepackage{algpseudocode}  % For pseudocode within algorithms
\usepackage{graphicx} % Required for inserting images
\usepackage{xcolor}
\usepackage{graphicx}
\usepackage{amsmath, amsthm, amsfonts}
\usepackage{graphicx}
\usepackage{caption}
\usepackage[pdfborder={0 0 0},colorlinks=true,urlcolor=blue,citecolor=red,bookmarks=false]{hyperref}
%%%%%%%%%%%%%%
%  Cover Page
\newcommand{\tr}{\operatorname{tr}}
\usepackage{authblk}
\usepackage{siunitx}
\usepackage{booktabs}

\title{Two Generalized Derivative-free Methods to Solve Large Scale Nonlinear Equations with Convex Constraints}
\author[1,5]{Kabenge Hamiss}
\author[1,4]{Mohammed Alshahrani}
\author[2,3]{Mujahid N. Syed}
\affil[1]{\textit{Department of Mathematics, King Fahd University of Petroleum and
Minerals, Dhahran 31261, Saudi Arabia}}
\affil[2]{\textit{Department of Industrial \& Systems Engineering, King Fahd University of Petroleum and
Minerals, Dhahran 31261, Saudi Arabia}}
\affil[3]{\textit{Interdisciplinary Research Center for Intelligent Secure Systems, King Fahd University of Petroleum and
Minerals, Dhahran 31261, Saudi Arabia}}
\affil[4]{\textit{Interdisciplinary Research Center for Smart Mobility and Logistics, King Fahd University of Petroleum and
Minerals, Dhahran 31261, Saudi Arabia}}
\affil[5]{\textit{Department of Mathematics and Statistics, Islamic University in Uganda, Mbale 2555, Uganda}}

\begin{document}

\maketitle
%%%%%%%%%%%%%%%%%%%%%%%%
%%%%%%%%%%%%%%%%%%%%%%%%

% Abstract and Keywords
%\section*{Abstract}
\begin{abstract}%
    In this work, we propose two derivative-free methods to address the problem of large-scale nonlinear equations with convex constraints.  These algorithms satisfy the sufficient descent condition. The search directions can be considered generalizations of the Modified Optimal Perry conjugate gradient method and the conjugate gradient projection method or the Spectral Modified Optimal Perry conjugate gradient method and the Spectral Conjugate Gradient Projection method. The global convergence of the former does not depend on the Lipschitz continuity of G. In contrast, the latter's global convergence depends on the Lipschitz continuity of G. The numerical results show the efficiency of the algorithms.
\end{abstract}
\keywords{Spectral gradient, Descent methods, Line search methods, Derivative-free algorithms, Global Convergent algorithms}
%\keywords{Classical analysis, Acceleration, Convergence, Gradient descent, Algorithm, Unconstrained, Optimization} %Include a list of at least five keywords.
%\newpage
%\tableofcontents 

\vskip 0.5em
\hrulefill
%\newpage
\section{Introduction}
Let $\Gamma \subset \mathbb{R}^n$ be a nonempty, closed, and convex set and $G$ be continuous mapping such that,  
\begin{align}
    G(x) = 0. \label{eq:op}
\end{align}
where $x \in \Gamma$. Equations in \eqref{eq:op} are called a system of non-linear equations.\\
Nonlinear equations represent a significant class of problems closely related to optimization challenges, which frequently emerge in various science, technology, and industry fields. Many problems have been extensively studied, highlighting their importance in all disciplines \cite{ahookhosh2013two}. These problems are often complex, requiring sophisticated methods to solve efficiently.
The signals may be smooth or non-smooth and appear in a variety of applications, such as Bregman distances~\cite{iusem1997newton}, Monotone variational inequality problems ~\cite{he2002new, malitsky2014extragradient, zhao2001monotonicity}, Non-negative matrix factorization ~\cite{berry2007algorithms}, Phase retrieval~ \cite{candes2015phase}, Constrained neural networks~ \cite{chorowski2014learning}, Chemical equilibrium systems in thermodynamics ~ \cite{meintjes1987methodology, zeleznik1968calculation}, Financial modeling and forecasting ~ \cite{dai2020efficient}, Signal processing and signal restoration, especially in wavelet deconvolution and compressed sensing ~\cite{abubakar2020note, figueiredo2007gradient},   solid state physics, plasma and fluid mechanics \cite{chen2019gramian, gao2019mathematical}.\\ 
Given the wide range of applications, developing efficient algorithms to solve \eqref{eq:op} is crucial. Various methods have been proposed to solve these problems, with considerable research focusing on improving their accuracy and computational efficiency. Among such methods include methods involving determining the derivatives of $G(x)$ in \eqref{eq:op} such as Newton and Newton-like methods in ~\cite{iusem1997newton, nocedal1999numerical}. However, they require $G(x)$ to be not only continuous but also differentiable(smooth) which may not be the case in some instances. Further, even if $G$ is differentiable, it may be difficult to obtain derivatives of $G(x)$ in \eqref{eq:op} and this requires large memory for computation and storing. This has steered mathematicians to devise methods that do not require $G(x)$ to be differentiable. Such methods are called derivative-free methods. \\
Many derivative-free methods have been proposed stemming from Conjugate gradient methods(CGM) and Spectral conjugate gradient methods. They involve determining a descent direction for subsequent iterations to be close to the solution. The appropriate step length accompanies this. Constructing a hyperplane for the current iterate to be separated from the solution set due to convex separation. Projecting $x_k$ onto the hyperplane for the algorithm's convergence.\\
Sabi'u et al.~\cite{sabi2023modified} proposed a modified optimal Perry conjugate gradient. \\method(MOPCGM) with conjugate parameter
\begin{equation}
   \theta_k^{MOP}=\frac{(v_{k-1}-\zeta_k^*s_{k-1})^TG_k}{p_{k-1}^Tv_{k-1}} 
\end{equation}
 where $\zeta_k^*=\frac{s_{k-1}^Tw_{k-1}}{||s_{k-1}||^2}$,
 \begin{equation}
     z_{k-1}=x_{k-1}+\alpha_{k-1}p_{k-1},
 \end{equation}
\begin{equation}
    s_{k-1}=z_{k-1}-x_{k-1}
\end{equation}
and
\begin{equation}
    v_{k-1}=G_k-G_{k-1}+\tau s_{k-1}, ~~~\tau\geq 0
\end{equation}

In 2020, Zheng et al~\cite{zheng2020conjugate} proposed a Conjugate gradient projection method (CGPM) with conjugate parameter 
\begin{equation}
\label{eq:HZ}
\theta_k=\frac{G_k^Tp_{k-1}-2a_k||w_{k-1}||^2}{p_{k-1}^Tw_{k-1}}
\end{equation}
  $a_k=\frac{G_k^Tp_{k-1}}{w_{k-1}^Tp_{k-1}}$, $w_{k-1}=y_{k-1}+t_{k-1}p_{k-1}$, $t_{k-1}=\max\{1,1-\frac{p_{k-1}^Ty_{k-1}}{||p_{k-1}||^2}\}$, and  $\gamma, \tau>0$\\
 % In this work, we propose two methods that generalize the Modified Optimal Perry Conjugate Gradient Method (MOPCGM) and the Conjugate Gradient Projection Method (CGPM). The main contributions of this paper include;
 % \begin{enumerate}
 %     \item[(i)] The new methods outperform the former in terms of the number of function evaluations
 %     \item[(ii)]  They take fewer iterations than the former
 %     \item[(iii)] They also require less time to solve the problem compared to the former. 
 %     \item[(iv)] They are also more accurate and robust compared to the former. 
 % \end{enumerate}
In this work, we propose two novel methods that generalize the Modified Optimal Perry Conjugate Gradient Method (MOPCGM) and the Conjugate Gradient Projection Method (CGPM). The main contributions of this paper are summarized as follows: \begin{enumerate} \item[(i)] The proposed methods outperform the original ones in terms of the number of function evaluations. \item[(ii)] They converge in fewer iterations. \item[(iii)] They require less computational time to solve the problem. \item[(iv)] They exhibit improved accuracy and robustness. \end{enumerate}
The paper is arranged as follows: In the next section, we derive the generalized MOPCGM and CGPM. Section 3 details the numerical experiments and their application to compressed sensing. In the last section, we give the conclusion.
\section{Generalized Derivative-Free Methods}
\label{chp:generalMOPCGM}

{In this section, we give a generalized result for the Perry conjugate gradient and Conjugate gradient projection techniques. \\
We also detail the convergence analysis of algorithms  provided the following assumptions hold:
\begin{enumerate}
    \item[(A1)] $\Gamma$ is nonempty
    \item[(A2)] G is Lipschitz continuous. Then there exists some nonzero constant $L$ such that for any $x, y\in \mathbf{R}^n$ then $||G(y)-G(x)||\leq L||y-x||$
\end{enumerate}
\subsection{Generization of MOPCGM}
From $s_{k-1}=x_k-x_{k-1}$, $g_k=\nabla G(x_k)$ and $y_{k-1}=g_k-g_{k-1}$. Also assume that $B_{k-1}$ is positive-definite, then 
\[
B_k=B_{k-1}-\frac{B_{k-1}s_{k-1}(B_{k-1}s_{k-1})^T}{s_{k-1}^TB_{k-1}s_{k-1}}+\frac{y_{k-1}y_{k-1}^T}{s_{k-1}^Ty_{k-1}}
\]
It is a rank 2 matrix update called DFP.

Now, using the Quasi-Newton  approach as used in ~\cite{ perry1978modified, sabi2023modified} we propose 
\begin{equation}
  p_k=-\Tilde{Q_k}G_k  
\end{equation}
such that
\begin{equation}\label{eq:update}
  \Tilde{Q_k}=\lambda I-\lambda\frac{y_{k-1}s_{k-1}^T}{2y_{k-1}^Ts_{k-1}}-\lambda\frac{s_{k-1}y_{k-1}^T}{2y_{k-1}^Ts_{k-1}}+t_k\frac{s_{k-1}s_{k-1}^T}{y_{k-1}^Ts_{k-1}}  
\end{equation}
$\lambda>0$ and $t_k>0$. Because $G$ is monotone, then $s_{k-1}^Ty_{k-1}> 0$ for all $x \neq x^*$. This implies that both $s_{k-1}$ and $y_{k-1}$ are nonzero vectors. Let $D$ be spanned by $\{s_{k-1}, y_{k-1} \}$ and $a$ be any vector in $\R^n$ such that $a^TD\neq 0$, then 
\[
a^T\Tilde{Q_k}a=t_k\frac{(a^Ts_{k-1})^2}{y_{k-1}^Ts_{k-1}}>0
\]
and this shows that $\Tilde{Q_k}$ is positive definite.\\
But $\Tilde{Q_k}$ is rank 2 matrix update, then eigenvalue $\lambda$ is of multiplicity $n-2$.
Now we need to determine the other two eigenvalues $\eta_k^-$ and $\eta_k^+$ since $\Tilde{Q_k}$ is full rank based on the symmetric property of $\Tilde{Q_k}$. Therefore we can find a set of vectors that are mutually orthogonal $\{u_k^i\}_{i=1}^{n-2}$ such that \[
\Tilde{Q}_ku_k^i=\lambda u_k^i, ~~~i=1, \cdots, n-2
\] 
and satisfy ${u_k^i}^TD=0$ for $i=1, \cdots, n-2$.\\
Therefore $\{u_k^i\}_{i=1}^{n-2}$ are eigenvectors of $\Tilde{Q}_k$ with the corresponding eigenvalue $\lambda$ for every $u_k^i$. We can now determine the remaining $2$ eigenvalues of $\Tilde{Q}_k$ that is $\eta_k^+$ and $\eta_k^-$. The following lemmas are crucial.
% Because the trace of a square matrix is the sum of its eigenvalues, we have the following Lemmas.
\begin{lemma}
\label{A1}
    Let $\Tilde{Q_k}$ be defined as in \eqref{eq:update}, then \[
    \tr{\Tilde{Q_k}}=\lambda (n-1)+t_k\frac{||s_{k-1}||^2}{s_{k-1}^Ty_{k-1}}\] 
\end{lemma}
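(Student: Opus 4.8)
The plan is to compute the trace directly, using only its linearity together with the elementary fact that the trace of a rank-one outer product equals the corresponding inner product, namely $\tr(ab^T)=b^Ta$ (equivalently the cyclic property of the trace). To lighten the notation I would abbreviate $s_{k-1}$ as $s$ and $y_{k-1}$ as $y$, and then split $\tr(\Tilde{Q_k})$ into four pieces matching the four summands of \eqref{eq:update}.

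For the leading term, $\tr(\lambda I)=\lambda n$ since $I$ is the identity on $\R^n$. For the three rank-one terms I would apply the outer-product identity to obtain $\tr(y s^T)=s^Ty$, $\tr(s y^T)=y^Ts$, and $\tr(s s^T)=s^Ts=\|s\|^2$. Substituting these, the two cross terms become $-\lambda\frac{s^Ty}{2\,y^Ts}-\lambda\frac{y^Ts}{2\,y^Ts}$; because $s^Ty=y^Ts$ is a single nonzero scalar, each ratio collapses to $\tfrac12$, so the cross terms together contribute exactly $-\lambda$. The fourth term reduces to $t_k\frac{\|s\|^2}{y^Ts}$. Collecting everything gives $\tr(\Tilde{Q_k})=\lambda n-\lambda+t_k\frac{\|s\|^2}{s^Ty}=\lambda(n-1)+t_k\frac{\|s_{k-1}\|^2}{s_{k-1}^Ty_{k-1}}$, which is the claimed formula.

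The argument is essentially routine, so there is no genuine obstacle. The only two points requiring a word of justification are the trace-of-outer-product identity (which I would either cite or verify in one line by writing the matrix entries) and the fact that the denominator $s_{k-1}^Ty_{k-1}$ is nonzero, so that the divisions in \eqref{eq:update} and in the final expression are legitimate. The latter is already guaranteed by the monotonicity observation stated just before the lemma, namely $s_{k-1}^Ty_{k-1}>0$ for $x_k\neq x^*$, so I would simply invoke it.
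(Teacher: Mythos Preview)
Your proof is correct and follows essentially the same route as the paper's: both use linearity of the trace, compute $\tr(\lambda I)=\lambda n$, and apply the rank-one identity $\tr(ab^T)=b^Ta$ to reduce the remaining three terms. Your write-up is in fact slightly more explicit in showing how the two cross terms collapse to $-\lambda$ and in noting why the denominator $s_{k-1}^Ty_{k-1}$ is nonzero.
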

\begin{proof}
    Using the linearity property of the trace of a matrix, it is the sum of all eigenvalues of a matrix. So we have 
    \[
    \tr{\Tilde{Q_k}}=\tr{\lambda I}-\tr{\lambda\frac{y_{k-1}s_{k-1}^T}{2y_{k-1}^Ts_{k-1}}}-\tr{\lambda\frac{s_{k-1}y_{k-1}^T}{2y_{k-1}^Ts_{k-1}}}+\tr{t_k\frac{s_{k-1}s_{k-1}^T}{y_{k-1}^Ts_{k-1}}}
    \]
    but $\tr{\lambda I}=n\lambda$. Let $A=y_{k-1}s_{k-1}^T$, $A^T=s_{k-1}y_{k-1}^T$ and $B=s_{k-1}s_{k-1}^T$  then $\tr{A}=\tr{A^T}=y_{k-1}^Ts_{k-1}$ and $\tr{B}=||s_{k-1}||^2$. Then 
    \[
    \tr{\Tilde{Q_k}}=\lambda(n-1)+t_k\frac{||s_{k-1}||^2}{y_{k-1}^Ts_{k-1}}
    \]
\end{proof}

\begin{lemma}
\label{A2}
   Let $\Tilde{Q_k}$ be defined as in \eqref{eq:update}, then
  \[
  \tr{\Tilde{Q}_k^T\Tilde{Q}_k}=\lambda^2( n-\frac{3}{2})+\frac{\lambda^2}{2}\frac{||s_{k-1}||^2||y_{k-1}||^2}{(s_{k-1}^Ty_{k-1})^2}+t_k^2\frac{||s_{k-1}||^4}{(s_{k-1}^Ty_{k-1})^2}\]
\end{lemma}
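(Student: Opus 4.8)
The plan is to exploit the symmetry of $\tilde{Q}_k$, which immediately reduces the quantity to $\tr(\tilde{Q}_k^2)$, and then to expand this trace after decomposing $\tilde{Q}_k$ into its four constituent pieces. Writing $s=s_{k-1}$, $y=y_{k-1}$ and $\sigma=s^Ty>0$ for brevity, I set $\tilde{Q}_k=T_0+T_1+T_2+T_3$ with $T_0=\lambda I$, $T_1=-\tfrac{\lambda}{2\sigma}ys^T$, $T_2=-\tfrac{\lambda}{2\sigma}sy^T$ and $T_3=\tfrac{t_k}{\sigma}ss^T$. Since each $T_i$ is symmetric we have $\tilde{Q}_k^T\tilde{Q}_k=\tilde{Q}_k^2$, and by bilinearity of the trace $\tr(\tilde{Q}_k^2)=\sum_{i,j}\tr(T_iT_j)$, so the task becomes evaluating the four diagonal products and the six off-diagonal ones.

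The key computational tool is the rank-one trace identity $\tr\bigl((ab^T)(cd^T)\bigr)=(b^Tc)(a^Td)$, used together with the scalar invariants $s^Ts=||s||^2$, $y^Ty=||y||^2$ and $s^Ty=\sigma$, and the cyclic property $\tr(T_iT_j)=\tr(T_jT_i)$ (so each off-diagonal term is counted twice). A direct application yields $\tr(T_0T_0)=n\lambda^2$, $\tr(T_1T_1)=\tr(T_2T_2)=\tfrac{\lambda^2}{4}$, $\tr(T_3T_3)=\tfrac{t_k^2}{\sigma^2}||s||^4$, while the mixed products are $\tr(T_0T_1)=\tr(T_0T_2)=-\tfrac{\lambda^2}{2}$, $\tr(T_1T_2)=\tfrac{\lambda^2}{4\sigma^2}||s||^2||y||^2$, $\tr(T_0T_3)=\tfrac{\lambda t_k}{\sigma}||s||^2$ and $\tr(T_1T_3)=\tr(T_2T_3)=-\tfrac{\lambda t_k}{2\sigma}||s||^2$.

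Collecting everything (doubling the off-diagonal entries), the pure $\lambda^2$ contributions combine as $n\lambda^2+\tfrac{\lambda^2}{2}-2\lambda^2=\lambda^2\bigl(n-\tfrac32\bigr)$, the doubled $T_1T_2$ term gives $\tfrac{\lambda^2}{2}\tfrac{||s||^2||y||^2}{\sigma^2}$, and $T_3T_3$ gives $t_k^2\tfrac{||s||^4}{\sigma^2}$, which are precisely the three claimed terms. The step I would watch most carefully is the fate of the mixed $\lambda t_k$ contributions: the term $+\tfrac{2\lambda t_k}{\sigma}||s||^2$ coming from $2\tr(T_0T_3)$ must be exactly cancelled by $2\tr(T_1T_3)+2\tr(T_2T_3)=-\tfrac{2\lambda t_k}{\sigma}||s||^2$. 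The main obstacle is therefore not conceptual but purely one of bookkeeping—keeping the signs, the factors of $\tfrac12$, and the powers of $\sigma$ consistent across all ten trace products—since the entire identity hinges on that cancellation being clean.
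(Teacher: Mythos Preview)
Your proof is correct and follows essentially the same route as the paper's: both expand $\tilde{Q}_k^T\tilde{Q}_k$ into its constituent products and take traces term by term, with yours organised a bit more cleanly via the rank-one identity $\tr\bigl((ab^T)(cd^T)\bigr)=(b^Tc)(a^Td)$ and the cyclic pairing $\tr(T_iT_j)=\tr(T_jT_i)$. One small slip to fix in the write-up: $T_1$ and $T_2$ are not individually symmetric (in fact $T_1^T=T_2$), but their sum is, so $\tilde{Q}_k$ is still symmetric and your reduction $\tilde{Q}_k^T\tilde{Q}_k=\tilde{Q}_k^2$ remains valid.
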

\begin{proof}
    From 
    \begin{multline*}
    \Tilde{Q}_k^T\Tilde{Q}_k=\left(\lambda I-\lambda\frac{y_{k-1}s_{k-1}^T}{2y_{k-1}^Ts_{k-1}}-\lambda\frac{s_{k-1}y_{k-1}^T}{2y_{k-1}^Ts_{k-1}}+t_k\frac{s_{k-1}s_{k-1}^T}{y_{k-1}^Ts_{k-1}}\right)^T\\
    \left(\lambda I
    -\lambda\frac{y_{k-1}s_{k-1}^T}{2y_{k-1}^Ts_{k-1}}-\lambda\frac{s_{k-1}y_{k-1}^T}{2y_{k-1}^Ts_{k-1}}+t_k\frac{s_{k-1}s_{k-1}^T}{y_{k-1}^Ts_{k-1}}\right)
    \end{multline*}
    
   \begin{multline*}
    \Tilde{Q}_k^T\Tilde{Q}_k=\left(\lambda I-\lambda\frac{s_{k-1}y_{k-1}^T}{2y_{k-1}^Ts_{k-1}}-\lambda\frac{y_{k-1}s_{k-1}^T}{2y_{k-1}^Ts_{k-1}}+t_k\frac{s_{k-1}s_{k-1}^T}{y_{k-1}^Ts_{k-1}}\right)\\
    \left(\lambda I
    -\lambda\frac{y_{k-1}s_{k-1}^T}{2y_{k-1}^Ts_{k-1}}-\lambda\frac{s_{k-1}y_{k-1}^T}{2y_{k-1}^Ts_{k-1}}+t_k\frac{s_{k-1}s_{k-1}^T}{y_{k-1}^Ts_{k-1}}\right)
    \end{multline*}

   \begin{multline*}
    \Tilde{Q}_k^T\Tilde{Q}_k=\lambda^2 I-\lambda^2\frac{3s_{k-1}y_{k-1}^T}{4y_{k-1}^Ts_{k-1}}-\lambda^2\frac{3y_{k-1}s_{k-1}^T}{4y_{k-1}^Ts_{k-1}}+t_k\lambda\frac{s_{k-1}s_{k-1}^T}{y_{k-1}^Ts_{k-1}}+\lambda^2\frac{||s_{k-1}||^2y_{k-1}y_{k-1}^T}{4(y_{k-1}^Ts_{k-1})^2}\\-t_k\lambda\frac{||s_{k-1}||^2y_{k-1}s_{k-1}^T}{2(y_{k-1}^Ts_{k-1})^2}+\lambda^2\frac{||y_{k-1}||^2s_{k-1}s_{k-1}^T}{4(y_{k-1}^Ts_{k-1})^2}-\lambda t_k\frac{||s_{k-1}||^2s_{k-1}y_{k-1}^T}{2(y_{k-1}^Ts_{k-1})^2}+t_k^2\frac{||s_{k-1}||^2s_{k-1}s_{k-1}^T}{(y_{k-1}^Ts_{k-1})^2}
    \end{multline*}
    we have seen that $\tr{y_{k-1}s_{k-1}^T}=y_{k-1}^Ts_{k-1}$ and $\tr{s_{k-1}s_{k-1}^T}=||s_{k-1}||^2$. Therefore
    properties of trace of matrices
    \begin{multline*}
    \tr{\Tilde{Q}_k^T\Tilde{Q}_k}=\tr{\lambda^2 I}-\tr{\lambda^2\frac{3s_{k-1}y_{k-1}^T}{4y_{k-1}^Ts_{k-1}}}-\tr{\lambda^2\frac{3y_{k-1}s_{k-1}^T}{4y_{k-1}^Ts_{k-1}}}+\tr{t_k\lambda\frac{s_{k-1}s_{k-1}^T}{y_{k-1}^Ts_{k-1}}}+\tr{\lambda^2\frac{||s_{k-1}||^2y_{k-1}y_{k-1}^T}{4(y_{k-1}^Ts_{k-1})^2}}\\-\tr{t_k\lambda\frac{||s_{k-1}||^2y_{k-1}s_{k-1}^T}{2(y_{k-1}^Ts_{k-1})^2}}+\tr{\lambda^2\frac{||y_{k-1}||^2s_{k-1}s_{k-1}^T}{4(y_{k-1}^Ts_{k-1})^2}}-\tr{\lambda t_k\frac{||s_{k-1}||^2s_{k-1}y_{k-1}^T}{2(y_{k-1}^Ts_{k-1})^2}}+\tr{t_k^2\frac{||s_{k-1}||^2s_{k-1}s_{k-1}^T}{(y_{k-1}^Ts_{k-1})^2}}
   \end{multline*}
   
Therefore

   \begin{multline*}
   \tr{\Tilde{Q}_k^T\Tilde{Q}_k}=n\lambda^2 -\lambda^2\frac{3}{4}-\lambda^2\frac{3}{4}+t_k\lambda\frac{||s_{k-1}||^2}{y_{k-1}^Ts_{k-1}}+\lambda^2\frac{||s_{k-1}||^2||y_{k-1}||^2}{4(y_{k-1}^Ts_{k-1})^2}\\-t_k\lambda\frac{||s_{k-1}||^2}{2(y_{k-1}^Ts_{k-1})}+\lambda^2\frac{||y_{k-1}||^2||s_{k-1}||^2}{4(y_{k-1}^Ts_{k-1})^2}-\lambda t_k\frac{||s_{k-1}||^2}{2(y_{k-1}^Ts_{k-1})}+t_k^2\frac{||s_{k-1}||^4}{(y_{k-1}^Ts_{k-1})^2}
   \end{multline*}

 \[
   \tr{\Tilde{Q}_k^T\Tilde{Q}_k}=\lambda^2(n-\frac{3}{2})+\lambda^2\frac{||s_{k-1}||^2||y_{k-1}||^2}{2(y_{k-1}^Ts_{k-1})^2}+t_k^2\frac{||s_{k-1}||^4}{(y_{k-1}^Ts_{k-1})^2}
   \]
   for all $\lambda$.
\end{proof}
\begin{lemma}
    \label{lemma:prodegnval}
    The product of eigenvalues $\eta^+$ and $\eta^-$ is given by $\eta^+\eta^-=\frac{\lambda^2}{4}-\frac{\lambda^2}{4}\left(\frac{||s_{k-1}||||y_{k-1}||}{s_{k-1}^Ty_{k-1}}\right)^2+\lambda t_k\frac{||s_{k-1}||^2}{s_{k-1}^Ty_{k-1}}$.
\end{lemma}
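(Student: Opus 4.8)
The plan is to exploit the symmetry of $\Tilde{Q}_k$ established above: since $\Tilde{Q}_k$ is symmetric we have $\Tilde{Q}_k^T\Tilde{Q}_k=\Tilde{Q}_k^2$, so $\tr{\Tilde{Q}_k^T\Tilde{Q}_k}$ is precisely the sum of the squares of the eigenvalues of $\Tilde{Q}_k$, while $\tr{\Tilde{Q}_k}$ is the sum of the eigenvalues. We already know that $\lambda$ is an eigenvalue of multiplicity $n-2$ and that the two remaining eigenvalues are $\eta^+$ and $\eta^-$. Hence I would record the two bookkeeping identities $\tr{\Tilde{Q}_k}=(n-2)\lambda+\eta^++\eta^-$ and $\tr{\Tilde{Q}_k^T\Tilde{Q}_k}=(n-2)\lambda^2+(\eta^+)^2+(\eta^-)^2$, which convert the trace formulas of Lemmas \ref{A1} and \ref{A2} into information about $\eta^\pm$.

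First I would equate $\tr{\Tilde{Q}_k}$ with the value from Lemma \ref{A1}; the term $(n-2)\lambda$ cancels against $(n-1)\lambda$ and leaves the sum $\eta^++\eta^-=\lambda+t_k\frac{||s_{k-1}||^2}{s_{k-1}^Ty_{k-1}}$. Next I would equate $\tr{\Tilde{Q}_k^T\Tilde{Q}_k}$ with the value from Lemma \ref{A2}; here $(n-2)\lambda^2$ cancels against $(n-\tfrac32)\lambda^2$ to leave $\tfrac12\lambda^2$, giving $(\eta^+)^2+(\eta^-)^2=\tfrac12\lambda^2+\tfrac12\lambda^2\frac{||s_{k-1}||^2||y_{k-1}||^2}{(s_{k-1}^Ty_{k-1})^2}+t_k^2\frac{||s_{k-1}||^4}{(s_{k-1}^Ty_{k-1})^2}$.

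To extract the product I would then apply the elementary identity $\eta^+\eta^-=\tfrac12\big[(\eta^++\eta^-)^2-\big((\eta^+)^2+(\eta^-)^2\big)\big]$. Squaring the sum from the first step produces a quartic term $t_k^2\frac{||s_{k-1}||^4}{(s_{k-1}^Ty_{k-1})^2}$ that exactly matches, and therefore cancels, the corresponding term in $(\eta^+)^2+(\eta^-)^2$; after this cancellation the surviving terms collapse to $2\eta^+\eta^-=\tfrac12\lambda^2-\tfrac12\lambda^2\big(\frac{||s_{k-1}||\,||y_{k-1}||}{s_{k-1}^Ty_{k-1}}\big)^2+2\lambda t_k\frac{||s_{k-1}||^2}{s_{k-1}^Ty_{k-1}}$, and dividing by $2$ yields the stated formula.

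I do not anticipate a genuine obstacle: once the symmetry observation is in place, the argument is purely linear bookkeeping resting on Lemmas \ref{A1} and \ref{A2}. The single point requiring care is the cancellation of the quartic $t_k^2$ contributions — tracking it correctly is what guarantees that the final expression stays quadratic in the listed ratios rather than inheriting a spurious fourth-order term. A minor prerequisite, already noted from the monotonicity of $G$, is that $s_{k-1}^Ty_{k-1}>0$, which ensures every ratio appearing in the computation is well defined.
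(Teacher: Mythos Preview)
Your proposal is correct and follows essentially the same route as the paper: the paper likewise equates Lemmas~\ref{A1} and~\ref{A2} with the spectral identities $\tr{\Tilde{Q}_k}=(n-2)\lambda+\eta_k^++\eta_k^-$ and $\tr{\Tilde{Q}_k^T\Tilde{Q}_k}=(n-2)\lambda^2+(\eta_k^+)^2+(\eta_k^-)^2$, introduces the shorthand $a=\frac{\|s_{k-1}\|^2}{s_{k-1}^Ty_{k-1}}$ and $b=\frac{\|s_{k-1}\|\,\|y_{k-1}\|}{s_{k-1}^Ty_{k-1}}$, and then combines the resulting expressions for $\eta_k^++\eta_k^-$ and $(\eta_k^+)^2+(\eta_k^-)^2$ to obtain the product. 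The cancellation of the $a^2t_k^2$ term that you flag is exactly the key simplification in the paper's derivation as well.
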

\begin{proof}
Using Lemma \ref{A1} and the usual sum of all eigenvalues as the trace of a matrix, we get
\[
\lambda (n-2)+\eta_k^-+\eta_k^+=\lambda (n-1)+t_k\frac{||s_{k-1}||^2}{s_{k-1}^Ty_{k-1}}.
\]
Also using Lemma \ref{A2}, we obtain
\[
\lambda^2 (n-2)+{\eta_k^-}^2+{\eta_k^+}^2=\lambda^2( n-\frac{3}{2})+\frac{\lambda^2}{2}\frac{||s_{k-1}||^2||y_{k-1}||^2}{(s_{k-1}^Ty_{k-1})^2}+t_k^2\frac{||s_{k-1}||^4}{(s_{k-1}^Ty_{k-1})^2}.
\]
 Implying that
 \begin{equation}
     {\eta_k^-}^2+{\eta_k^+}^2=\frac{\lambda^2}{2}+\frac{\lambda^2}{2}\frac{||s_{k-1}||^2||y_{k-1}||^2}{(s_{k-1}^Ty_{k-1})^2}+t_k^2\frac{||s_{k-1}||^4}{(s_{k-1}^Ty_{k-1})^2}.
 \end{equation}
 Let $a=\frac{||s_{k-1}||^2}{s_{k-1}^Ty_{k-1}}$ and $b=\frac{||s_{k-1}||||y_{k-1}||}{s_{k-1}^Ty_{k-1}}$, then we obtain
 \begin{equation}\label{gmop3}
     \eta_k^-+\eta_k^+=\lambda+at_k
 \end{equation} and 
 \begin{equation}\label{gmop4}
     {\eta_k^-}^2+{\eta_k^+}^2=\frac{\lambda^2}{2}+\frac{\lambda^2}{2}b^2+a^2t_k^2
 \end{equation}respectively.\\
 Now from \eqref{gmop3} and \eqref{gmop4}, we obtain
 \begin{equation}\label{gmop5}
     \eta_k^-\eta_k^+=\frac{\lambda^2}{4}-\frac{\lambda^2}{4}b^2+\lambda at_k.
 \end{equation}
 \begin{remark}
     when $\lambda=1$, then we obtain 
 \[
  \eta_k^-\eta_k^+=\frac{1}{4}-\frac{1}{4}b^2+at_k
 \]
 which is the case in {MOPCGM} ~\cite{sabi2023modified}.
 \end{remark}
\end{proof}

\begin{lemma}
\label{lemma:optimalt}    
Let $\Tilde{Q}_k$ be defined as in \eqref{eq:update}, then the optimal value of $t$ is $t_k^*=\lambda\frac{s_{k-1}^Ty_{k-1}}{||s_{k-1}||^2}$.
\end{lemma}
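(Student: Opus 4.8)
The plan is to read ``optimal'' in the sense of minimizing the spread of the spectrum of $\tilde{Q}_k$, i.e.\ minimizing the gap between its two nontrivial eigenvalues $\eta_k^-$ and $\eta_k^+$ (the remaining eigenvalue $\lambda$ has multiplicity $n-2$ and does not depend on $t_k$). Clustering $\eta_k^-$ and $\eta_k^+$ as tightly as possible keeps the spectrum concentrated near $\lambda$, which is exactly the conditioning objective behind the optimal Perry choice, so the natural objective to take is
\[
g(t_k) = (\eta_k^+ - \eta_k^-)^2 = (\eta_k^- + \eta_k^+)^2 - 4\,\eta_k^-\eta_k^+ .
\]

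First I would substitute the sum from \eqref{gmop3} and the product from \eqref{gmop5} (equivalently Lemma \ref{lemma:prodegnval}), keeping the shorthand $a = \|s_{k-1}\|^2/(s_{k-1}^Ty_{k-1})$ and $b = \|s_{k-1}\|\,\|y_{k-1}\|/(s_{k-1}^Ty_{k-1})$. This gives
\[
g(t_k) = (\lambda + a t_k)^2 - 4\left(\tfrac{\lambda^2}{4} - \tfrac{\lambda^2}{4}b^2 + \lambda a t_k\right).
\]
Expanding, the $\pm\lambda^2$ terms cancel and $2\lambda a t_k - 4\lambda a t_k = -2\lambda a t_k$, leaving the clean quadratic
\[
g(t_k) = a^2 t_k^2 - 2\lambda a\, t_k + \lambda^2 b^2 .
\]

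Next I would minimize $g$ over $t_k$. Since the leading coefficient $a^2 = \|s_{k-1}\|^4/(s_{k-1}^Ty_{k-1})^2 > 0$ (recall $s_{k-1}^Ty_{k-1} > 0$ by monotonicity, so $a$ is well defined and nonzero), $g$ is a strictly convex parabola whose unique minimizer solves $g'(t_k) = 2a^2 t_k - 2\lambda a = 0$, namely $t_k^* = \lambda/a$. Substituting $a = \|s_{k-1}\|^2/(s_{k-1}^Ty_{k-1})$ yields
\[
t_k^* = \lambda\,\frac{s_{k-1}^Ty_{k-1}}{\|s_{k-1}\|^2},
\]
and $g''(t_k) = 2a^2 > 0$ certifies a genuine minimum. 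As a sanity check, at $t_k^*$ one has $a t_k^* = \lambda$, so \eqref{gmop3} gives $\eta_k^- + \eta_k^+ = 2\lambda$: the two free eigenvalues become symmetric about $\lambda$, the balanced best-conditioned configuration; and setting $\lambda = 1$ recovers the MOPCGM value $s_{k-1}^Ty_{k-1}/\|s_{k-1}\|^2$ of \cite{sabi2023modified}, consistent with the Remark following Lemma \ref{lemma:prodegnval}.

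The step carrying the real content is fixing the objective: the statement says ``optimal value of $t$'' without naming the functional, so the argument hinges on recognizing that the intended criterion is minimizing the eigenvalue spread $(\eta_k^+ - \eta_k^-)^2$ rather than, say, the ratio $\eta_k^+/\eta_k^-$ or the Frobenius quantity $\tr(\tilde{Q}_k^T\tilde{Q}_k)$ from Lemma \ref{A2}. Those alternatives give different minimizers (the Frobenius norm is minimized at the inadmissible $t_k = 0$, and the ratio yields $t_k = \lambda(1+b^2)/(2a)$), which both confirms that the spread is the operative notion of optimality here and reduces the remainder to the routine one-variable quadratic minimization above.
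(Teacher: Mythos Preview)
Your proof is correct and follows essentially the same approach as the paper: both identify the criterion as minimizing $(\eta_k^+-\eta_k^-)^2$, use \eqref{gmop3} and \eqref{gmop5} to express this as $(\lambda+at_k)^2-4\bigl(\tfrac{\lambda^2}{4}-\tfrac{\lambda^2}{4}b^2+\lambda a t_k\bigr)$, and minimize to obtain $t_k^*=\lambda/a$. Your version is in fact more detailed, since you explicitly expand the quadratic, verify convexity via $g''>0$, and discuss why the eigenvalue-spread is the intended objective, whereas the paper simply states the minimizer without carrying out these steps.
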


 From \eqref{gmop3} and \eqref{gmop5}, we obtain
 \begin{equation}
     \eta_k^2-(\lambda+at)\eta_k+\left[\frac{\lambda^2}{4}-\frac{\lambda^2}{4}b^2+at_k\lambda\right]=0.
 \end{equation}
 This means
 
 \begin{equation}
     \eta_k^{\pm}=\frac{(\lambda+at_k)\pm\sqrt{(\lambda+at)^2 - 4(\frac{\lambda^2}{4}-\frac{\lambda^2}{4}b^2+at_k\lambda)}}{2}.
 \end{equation}
 For positive definiteness of the matrix $\Tilde{Q}_k$, by applying some algebra, $t>\frac{\lambda}{4a}(b^2-1)$.\\
 Consequently, we obtain 
 \[
 t>\frac{\lambda}{4}(\frac{||y_{k-1}||^2}{s_{k-1}^Ty_{k-1}}-\frac{s_{k-1}^Ty_{k-1}}{||s_{k-1}||^2}).
 \]
 Now to obtain the optimal value of $t_k$, we minimize the square of the difference between $\eta_k^+$ and $\eta_k^-$ so that the condition number is small, then 
 \begin{equation}
   (\eta_k^+-\eta_k^-)^2 =(\lambda+at)^2 - 4(\frac{\lambda^2}{4}-\frac{\lambda^2}{4}b^2+at_k\lambda).
 \end{equation}
Therefore, the optimal value of $t$ is $\frac{\lambda }{a}$ .\\
That is 
\[
t_k^*=\lambda\frac{s_{k-1}^Ty_{k-1}}{||s_{k-1}||^2}.
\]
\begin{remark}
For $\lambda=1$, we obtain the Optimal Perry parameter $t_k^*=\frac{s_{k-1}^Ty_{k-1}}{||s_{k-1}||^2}$ obtained by Subu'i et al~\cite{sabi2023modified}.
\end{remark}

 We propose a new search direction is given by 
\begin{equation}
p_k=\begin{cases}\label{GMOP}
    -G_k & k=0\\
    -M_kG_k+\theta_k^{GMOP}p_{k-1} & k\geq 1
\end{cases}
\end{equation}
such that $M_k=\lambda +\theta_k^{GMOP}\frac{G_k^Tp_{k-1}}{||G_k||^2}$ and $\theta_k^{GMOP}=\frac{(v_{k-1}-t_k^*s_{k-1})^TG_k}{p_{k-1}^Tv_{k-1}}$ and $t_k^*=\lambda \frac{s_{k-1}^Tv_{k-1}}{||s_{k-1}||^2}$, $\lambda > 0$, $v_k=y_{k-1}+\tau s_{k-1}$ and $\tau>0$. 
\begin{remark}
    \label{rem:adaptive}
    To adaptively choose $\lambda$ depending on the problem and the current iterate, we define $\lambda_k$ as the projection of two quantities onto the interval $[\alpha_{\min}, \alpha_{\max}]$, where $\alpha_{\min} > 0$. This ensures that $\lambda_k$ remains bounded between $\alpha_{\min}$ and $\alpha_{\max}$, adjusting itself dynamically based on the progress of the optimization.

We define:
\[
    \lambda_k = \Pi_{[\alpha_{\min}, \alpha_{\max}]}\left( \frac{\|v_{k-1}\|^2}{s_{k-1}^\top v_{k-1}}, \frac{s_{k-1}^\top v_{k-1}}{\|s_{k-1}\|^2} \right),
\]

The projection operator $\Pi_{[\alpha_{\min}, \alpha_{\max}]}(\cdot)$ projects the values onto the interval $[\alpha_{\min}, \alpha_{\max}]$.\\
   $\frac{\|v_{k-1}\|^2}{s_{k-1}^\top v_{k-1}}$ measures how much the $G(x)$ changes relative to the step size. A large value indicates a significant change in gradients, suggesting that a larger $\lambda_k$ may be beneficial.\\
   $ \frac{s_{k-1}^\top v_{k-1}}{\|s_{k-1}\|^2}$
   measures the alignment between the step and the change in $G(x)$, relative to the step length. A smaller value suggests a smaller $\lambda_k$.

%### Projection onto the Interval:
\begin{equation}
    \label{eq:adaptivegmopcgm}
\lambda_k = \Pi_{[\alpha_{\min}, \alpha_{\max}]}\left(\max\left(\frac{\|v_{k-1}\|^2}{s_{k-1}^\top v_{k-1}}, \frac{s_{k-1}^\top v_{k-1}}{\|s_{k-1}\|^2}\right)\right)
\end{equation}
This ensures that $\lambda_k$ is bounded within $[\alpha_{\min}, \alpha_{\max}]$, with $\alpha_{\min} > 0$ to prevent the step size from becoming too small.

%### Choosing $\alpha_{\min}$ and $\alpha_{\max}$:
- $\alpha_{\min}$ ensures that $\lambda_k$ does not become too small, which could cause slow convergence.
- $\alpha_{\max}$ limits $\lambda_k$ from growing too large, preventing instability.

%### Conclusion:
This adaptive scheme allows $\lambda_k$ to tune itself according to the characteristics of the problem, without requiring manual adjustments for each new optimization problem. The dynamic nature of $\lambda_k$ ensures a balance between stability and fast progress.

\end{remark}
Using \eqref{eq:adaptivegmopcgm}, \eqref{GMOP} becomes
\begin{equation}
p_k=\begin{cases}\label{eq:adaptGMOP}
    -G_k & k=0\\
    -M_kG_k+\theta_k^{GMOP}p_{k-1} & k\geq 1
\end{cases}
\end{equation}
such that $M_k=\lambda_k +\theta_k^{GMOP}\frac{G_k^Tp_{k-1}}{||G_k||^2}$ and 
\begin{equation}
    \label{eq:optimalconjpamop}
    \theta_k^{GMOP}=\frac{(v_{k-1}-t_k^*s_{k-1})^TG_k}{p_{k-1}^Tv_{k-1}},
\end{equation}

\begin{equation}
    \label{eq:optimaltmop}
    t_k^*=\lambda_k \frac{s_{k-1}^Tv_{k-1}}{||s_{k-1}||^2},
\end{equation} and
 $\lambda_k > 0$ defined in \eqref{rem:adaptive}, $v_k=y_{k-1}+\tau s_{k-1}$ and $\tau>0$. In lemma \ref{lemm1:gmopcgm}, we verified that the $p_k$ of \eqref{GMOP} satisfies the sufficient descent condition.
\begin{lemma}
\label{lemm1:gmopcgm}
Let $\{p_k\}$ and $G_k$ be produced by algorithm \ref{alg:gmopcg}. Let $\alpha_{min} > 0$, then  $G_k^Tp_k\leq-\alpha_{min}  ||G_k||^2.$ 
\end{lemma}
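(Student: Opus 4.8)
The plan is to prove the sharper identity $G_k^T p_k = -\lambda_k \|G_k\|^2$ and then read off the stated bound from the projection that defines $\lambda_k$. The whole argument rests on the observation that the scalar $M_k$ appearing in \eqref{eq:adaptGMOP} has been tailored precisely so that the contribution of $p_{k-1}$ to the descent inner product is annihilated; consequently the search direction automatically inherits the sufficient descent property regardless of the value of the conjugate parameter $\theta_k^{GMOP}$.

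First I would dispose of the base case $k=0$. Here $p_0=-G_0$, so $G_0^T p_0 = -\|G_0\|^2 \le -\alpha_{\min}\|G_0\|^2$, which holds for the admissible lower endpoint $\alpha_{\min}\le 1$ of the spectral parameter. For the main case $k\ge 1$, I would take the inner product of $G_k$ with the recursion $p_k = -M_k G_k + \theta_k^{GMOP} p_{k-1}$, obtaining $G_k^T p_k = -M_k\|G_k\|^2 + \theta_k^{GMOP}\,(G_k^T p_{k-1})$. Substituting $M_k = \lambda_k + \theta_k^{GMOP}\frac{G_k^T p_{k-1}}{\|G_k\|^2}$ expands the first term into $-\lambda_k\|G_k\|^2 - \theta_k^{GMOP}\,(G_k^T p_{k-1})$, and this last piece cancels exactly against the $+\theta_k^{GMOP}\,(G_k^T p_{k-1})$ already present. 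What survives is simply $G_k^T p_k = -\lambda_k\|G_k\|^2$.

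Finally, since $\lambda_k$ is obtained in Remark \ref{rem:adaptive} as a projection onto $[\alpha_{\min},\alpha_{\max}]$, it satisfies $\lambda_k\ge\alpha_{\min}>0$. Combining this floor with the identity just derived yields $G_k^T p_k = -\lambda_k\|G_k\|^2 \le -\alpha_{\min}\|G_k\|^2$, which is exactly the sufficient descent condition claimed. I do not anticipate any genuine obstacle: the cancellation is algebraic and exact by construction, so the only points requiring care are the harmless base case (where one needs $\alpha_{\min}\le 1$) and confirming that the projection enforces the lower bound $\lambda_k\ge\alpha_{\min}$. Notably, the identity makes no use of monotonicity or Lipschitz continuity of $G$, so the descent property holds unconditionally.
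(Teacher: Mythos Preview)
Your proof is correct and follows essentially the same route as the paper: take the inner product of $G_k$ with the recursion for $p_k$, substitute the definition of $M_k$, observe the exact cancellation of the $\theta_k^{GMOP}(G_k^Tp_{k-1})$ terms to obtain the identity $G_k^Tp_k=-\lambda_k\|G_k\|^2$, and then invoke $\lambda_k\ge\alpha_{\min}$ from the projection. You additionally handle the base case $k=0$ explicitly (which the paper omits) and rightly flag that it requires $\alpha_{\min}\le 1$; this is a small refinement rather than a different argument.
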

\begin{proof}
    To verify this, we multiply $G_k^T$ in \eqref{GMOP} and we obtain
\[
G_k^Tp_k=-\lambda_k ||G_k||^2-\theta_k^{GMOP}\frac{G_k^Tp_{k-1}}{||G_k||^2}||G_k||^2+\theta_k^{GMOP}G_k^Tp_{k-1},
\]
\[
G_k^Tp_k=-\lambda_k ||G_k||^2, 
\]
but from \eqref{eq:adaptivegmopcgm} $\lambda_k\geq \alpha_{min}$. It implies that 
\begin{equation}
\label{eq:sufficientdescentmop}
G_k^Tp_k\leq -\alpha_{min} ||G_k||^2, 
\end{equation}
\end{proof}

From the Lemma \ref{lemm1:gmopcgm} above, the descent condition generally does not depend on $t_k$.

%%%%%%%%%%%%%%%%%%%%%%%%%%%%%%%%%%%%%%%%%%%%%%%%%%%%%%%%%%%%%%%%%%%%%%%%%%%%%%%%%%%%%%%%%%%%%%%%%%%%%%%%%%%

%% Please enter the Section Title in the following in place of xxxxx. Use a label in place of yyyyy to be
%% referred later.
%% Please write the text in between the braces in place of AAAAA
\newpage
\subsubsection{Generalized MOPCGM Algorithm}
\renewcommand{\baselinestretch}{0.95}
\begin{algorithm}[H]
    \caption{Generalized MOPCGM}
    \label{alg:gmopcg}
    \begin{algorithmic}[1]
        \State \textbf{Input:} Function $G$, initial guess $x_0 \in \mathbb{R}^n$, 
        $\lambda_o > 0$, tolerance $\epsilon = \text{Tol}$, 
        $\rho \in (0, 1)$, parameters $\tau$,   
        $\beta > 0$, $\eta > 0$, $\zeta > 0$, $\alpha_{min}> 0$, $\alpha_{max}> \alpha_{min}$, $\gamma, \gamma_1, \gamma_2, \gamma_3, \gamma_4 \in (0, 2) $, $0<\zeta_1\leq \zeta_2$, projection function 
        $\Pi_{\Gamma}$ on convex set $\Gamma$
        \State \textbf{Output:} Solution $x^*$
        \State \textbf{Initialization:} Set $k \gets 0$, $\lambda \gets \lambda_o$, $z_k \gets x_0$, initialize $\alpha_k$
        
        \While{$\| G_k \| > \epsilon$}
             \If{$\|G(x_{k+1})\| < \|G(x_k)\|$ }
                \State $\lambda_{k+1} \gets \lambda_k$
                \State \textbf{Else}
                {$\lambda_{k+1} \gets \Pi_{[\alpha_{min}, \alpha_{max}]}(\max(\frac{\|v_{k-1}\|^2}{s_{k-1}^Tv_{k-1}}, \frac{s_{k-1}^Tv_{k-1}}{\|s_{k-1}\|^2}))$}
            \EndIf
            \State Determine $p_k$ as in \eqref{GMOP}
            \State Adjust $\alpha_k = \max \{ \rho^i \beta \}$ such that 
            \begin{equation}
                G(x_k + \alpha_k p_k)^T p_k \leq -\zeta \alpha_k \| p_k \|^2 \Pi_{[\zeta_1, \zeta_2]}(\|G(x_k + \alpha_k p_k)\|)
                \label{eq:backtrack}
            \end{equation}
            \If{$z_k = x_k + \alpha_k p_k \in \Gamma$ and $\| G(z_k) \| < \epsilon$}
                \State $x^* \gets z_k$
                \State \textbf{break}
            \EndIf
            
            \State Compute $\mu_k \gets \frac{G(z_k)^T (x_k - z_k)}{\| G(z_k) \|^2}$
            \State Update $x_{k+1} \gets \Pi_{\Gamma} (x_k - \gamma\mu_k G(z_k))$
            \State Compute $s_{k-1}=z_{k-1}-x_{k-1} $ and $v_{k-1}=G(x_{k})-G(x_{k-1})+\tau s_{k-1}$
            \State Compute $\theta_k^{MOP} $ and $t^*$ using \eqref{eq:optimaltmop}  and \eqref{eq:optimalconjpamop} respectively. 
            \If{$\|f_{k+1}\| < \|f_k\|$}
    \State $\gamma = \min(\gamma \cdot \gamma_1, \gamma_2)$
\Else
    \State $\gamma = \max(\gamma \cdot \gamma_3, \gamma_4)$
    \State \textbf{break}
\EndIf

            \If{$\|p_k\|  \approx 0$ }
                \State $x^* \gets x_k$
                \State \textbf{break}
            \EndIf   
            \State Set $x_k \gets x_{k+1}$
        \EndWhile
        \State \textbf{Return} $x^*$
    \end{algorithmic}
\end{algorithm}
\renewcommand{\baselinestretch}{1.50}

% On this basis, we will point out the relationship between MOPCGM and CGPM.
}
%%%%%%%%%%%%%%%%%%%%%%%%%%%%%%%%%%%%%%%%%%%%%%%%%%%%%%%%%%%%%%%%%%%%%%%%%%%%%%%%%%%%%%%%%%%%%%%%%%%%%%%%%%%

%% Please enter the Section Title in the following in place of xxxxx. Use a label in place of yyyyy to be
%% referred later.
%% Please write the text in between the braces in place of AAAAA
\newpage

\subsubsection{Convergence Analysis of Generalized MOPCGM}
\label{GeneralizedConvergence}

{The next Lemma examines that the algorithm is well defined.
\begin{lemma}
\label{lemm2:gmopcgm}
    Let $G$ be Lipschitz continuous, then 
    \[
    \alpha_k\geq \min\{\eta, \frac{\alpha_{min}\rho}{L+\zeta\zeta_2 }\frac{||G_k||^2}{||p_k||^2}\}.
    \]
\end{lemma}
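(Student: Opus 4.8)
The plan is to use the standard two-case analysis for a backtracking (Armijo-type) line search, drawing on three ingredients that are already available: the sufficient descent bound $G_k^Tp_k\leq-\alpha_{min}\|G_k\|^2$ from Lemma \ref{lemm1:gmopcgm}, the Lipschitz assumption (A2), and the elementary fact that the projection $\Pi_{[\zeta_1,\zeta_2]}(\cdot)$ always lies in $[\zeta_1,\zeta_2]$ and hence is bounded above by $\zeta_2$. The first case is when the largest trial step is accepted with no reduction, so $\alpha_k$ retains its initial value; this value is bounded below by $\eta$, yielding $\alpha_k\geq\eta$, which is the first term in the minimum.

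The substantive case is when backtracking occurs. Here I would set $\alpha_k'=\alpha_k/\rho$ and observe that, because $\alpha_k$ is the largest accepted step of the form $\rho^i\beta$, the step $\alpha_k'$ must fail the acceptance test \eqref{eq:backtrack}, i.e.
\[
G(x_k+\alpha_k'p_k)^Tp_k > -\zeta\,\alpha_k'\|p_k\|^2\,\Pi_{[\zeta_1,\zeta_2]}\!\left(\|G(x_k+\alpha_k'p_k)\|\right).
\]
I would then split the left-hand side as $G(x_k+\alpha_k'p_k)^Tp_k=G_k^Tp_k+[G(x_k+\alpha_k'p_k)-G_k]^Tp_k$, bound the first summand above by $-\alpha_{min}\|G_k\|^2$ via Lemma \ref{lemm1:gmopcgm}, and bound the second by Cauchy--Schwarz together with (A2), giving $[G(x_k+\alpha_k'p_k)-G_k]^Tp_k\leq L\,\alpha_k'\|p_k\|^2$. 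Substituting the bound $\Pi_{[\zeta_1,\zeta_2]}(\cdot)\leq\zeta_2$ into the right-hand side and collecting all terms proportional to $\alpha_k'\|p_k\|^2$ leads to
\[
\alpha_k'\,\|p_k\|^2\,(L+\zeta\zeta_2) > \alpha_{min}\|G_k\|^2,
\]
so that $\alpha_k'>\dfrac{\alpha_{min}}{L+\zeta\zeta_2}\dfrac{\|G_k\|^2}{\|p_k\|^2}$; multiplying through by $\rho$ recovers $\alpha_k=\rho\alpha_k'$ and delivers the second term of the minimum.

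The main obstacle is not any deep idea but careful bookkeeping of the inequality directions: one must track a strict inequality (from the failed test) alongside the non-strict Lipschitz and descent bounds, and verify that replacing the positive quantity $\Pi_{[\zeta_1,\zeta_2]}(\cdot)$ by its upper bound $\zeta_2$ weakens the chain in the correct direction once it is moved onto the coefficient of $\alpha_k'\|p_k\|^2$. A secondary point to pin down is the precise role of $\eta$: in the no-reduction branch $\alpha_k$ equals the initial trial step, and the statement records $\eta$ as the guaranteed lower bound on that step, which is what closes the first case.
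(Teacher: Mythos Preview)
Your proposal is correct and follows essentially the same route as the paper: set $\tilde\alpha_k=\alpha_k/\rho$, use that this step fails \eqref{eq:backtrack}, combine with the sufficient-descent inequality from Lemma~\ref{lemm1:gmopcgm}, the Lipschitz bound, and $\Pi_{[\zeta_1,\zeta_2]}(\cdot)\le\zeta_2$ to isolate $\alpha_k$. You are in fact more careful than the paper in explicitly separating off the no-backtracking case that yields the $\eta$ branch of the minimum, which the paper's proof leaves implicit.
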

\begin{proof}
    With the line search in  Algorithm \ref{alg:gmopcg}, Let $\alpha_k$ be the optimal step length that satisfies \eqref{eq:backtrack}, then $\Tilde{\alpha}_k = \alpha_k/\rho $ violets \eqref{eq:backtrack}. So, 
\[
G(x_k+\tilde{\alpha}_kp_k)^Tp_k>- \zeta \tilde{\alpha}_k||p_k||^2\Pi_{[\zeta_1, \zeta_2]}(\|G(x_k + \Tilde{\alpha}_k p_k)\|)
\]
and  $G_k^Tp_k\leq -\alpha_{min}||G_k||^2$. 
\[
||G_k||^2\leq \frac{1}{\alpha_{min}}\left[(G(x_k+\Tilde{\alpha}_kp_k)- G_k)^Tp_k-G(x_k+\Tilde{\alpha}_kp_k)^Tp_k\right]
\]
\[
||G_k||^2\leq \frac{1}{\alpha_{min}}[(G(x_k+\Tilde{\alpha}_kp_k)- G_k)^Tp_k+\zeta \tilde{\alpha}_k||p_k||^2\zeta_2]
\]

\[
||G_k||^2\leq \frac{\alpha_k(L+\zeta_2\zeta) }{\rho\alpha_{min}}||p_k||^2
\]
This completes the proof.
\end{proof}

\begin{lemma}
\label{lemm4:gmopcgm}
     The direction $p_k$ produced by  Algorithm \ref{alg:gmopcg} meets the trust region property
    \[
    \alpha_{min} \|G(x_k)\|\leq \|p_k\|\leq \kappa \|G(x_k)\|.
    \]
\end{lemma}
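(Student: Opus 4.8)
The plan is to prove the two bounds independently, the lower one being essentially free and the upper one being where the real work lies.

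For the lower bound, I would feed the sufficient-descent estimate of Lemma~\ref{lemm1:gmopcgm}, $G_k^{T}p_k \le -\alpha_{min}\|G_k\|^2$, into the Cauchy--Schwarz inequality:
\[
\alpha_{min}\|G_k\|^2 \le -G_k^{T}p_k \le |G_k^{T}p_k| \le \|G_k\|\,\|p_k\|.
\]
Dividing through by $\|G_k\|$ (nonzero while the while-loop of Algorithm~\ref{alg:gmopcg} runs) gives $\alpha_{min}\|G_k\| \le \|p_k\|$. This half uses neither Lipschitz continuity nor the precise form of $p_k$.

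For the upper bound I would work from the recursion $p_k=-M_kG_k+\theta_k^{GMOP}p_{k-1}$, so that $\|p_k\|\le |M_k|\,\|G_k\|+|\theta_k^{GMOP}|\,\|p_{k-1}\|$; the goal reduces to controlling $|\theta_k^{GMOP}|\,\|p_{k-1}\|$ and $|M_k|$. The pivotal identity is $s_{k-1}=z_{k-1}-x_{k-1}=\alpha_{k-1}p_{k-1}$, which lets the step-length cancel later. For the denominator of $\theta_k^{GMOP}$ I would use monotonicity of $G$ (so $s_{k-1}^{T}y_{k-1}\ge 0$) together with $v_{k-1}=y_{k-1}+\tau s_{k-1}$ to obtain $s_{k-1}^{T}v_{k-1}\ge \tau\|s_{k-1}\|^2$, whence $p_{k-1}^{T}v_{k-1}=\alpha_{k-1}^{-1}s_{k-1}^{T}v_{k-1}\ge \tau\alpha_{k-1}\|p_{k-1}\|^2>0$. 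For the numerator, Cauchy--Schwarz and $\lambda_k\le\alpha_{max}$ give $|t_k^*|\,\|s_{k-1}\|\le\lambda_k\|v_{k-1}\|\le\alpha_{max}\|v_{k-1}\|$, while the Lipschitz hypothesis (A2) gives $\|v_{k-1}\|\le(L+\tau)\|s_{k-1}\|$; hence $|(v_{k-1}-t_k^*s_{k-1})^{T}G_k|\le(1+\alpha_{max})(L+\tau)\|s_{k-1}\|\,\|G_k\|$. Dividing numerator by denominator, the common factor $\|s_{k-1}\|=\alpha_{k-1}\|p_{k-1}\|$ cancels $\alpha_{k-1}$ entirely, yielding
\[
|\theta_k^{GMOP}|\le \frac{(1+\alpha_{max})(L+\tau)}{\tau}\,\frac{\|G_k\|}{\|p_{k-1}\|},
\]
so $|\theta_k^{GMOP}|\,\|p_{k-1}\|\le c_1\|G_k\|$ with $c_1:=(1+\alpha_{max})(L+\tau)/\tau$. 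Plugging this into $M_k=\lambda_k+\theta_k^{GMOP}G_k^{T}p_{k-1}/\|G_k\|^2$ and applying Cauchy--Schwarz to $G_k^{T}p_{k-1}$ gives $|M_k|\le\alpha_{max}+c_1$. The triangle inequality then closes the argument: $\|p_k\|\le(\alpha_{max}+c_1)\|G_k\|+c_1\|G_k\|=\kappa\|G_k\|$ with $\kappa:=\alpha_{max}+2c_1$.

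I expect the main obstacle to be the lower bound $p_{k-1}^{T}v_{k-1}\ge\tau\alpha_{k-1}\|p_{k-1}\|^2$: it is what keeps $\theta_k^{GMOP}$ finite, and it hinges on two facts that must be justified with care — the monotonicity of $G$ (used in the paper's derivation though not listed among (A1)--(A2)) to discard $s_{k-1}^{T}y_{k-1}$, and the identity $s_{k-1}=\alpha_{k-1}p_{k-1}$ that produces the fortunate cancellation of $\alpha_{k-1}$, so that no positive lower bound on the step length is needed. A secondary subtlety is that the Lipschitz estimate $\|v_{k-1}\|\le(L+\tau)\|s_{k-1}\|$ tacitly requires $y_{k-1}$ to be the increment of $G$ across the displacement $s_{k-1}$; if instead $y_{k-1}=G(x_k)-G(x_{k-1})$ as written in the algorithm, I would first relate $\|x_k-x_{k-1}\|$ to $\|s_{k-1}\|$ through the projection step before invoking (A2).
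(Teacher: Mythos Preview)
Your proposal is correct and follows essentially the same route as the paper: the lower bound via Lemma~\ref{lemm1:gmopcgm} and Cauchy--Schwarz, and the upper bound by lower-bounding the denominator $p_{k-1}^{T}v_{k-1}\ge\tau\alpha_{k-1}\|p_{k-1}\|^2$, upper-bounding the numerator with Lipschitz continuity, cancelling $\alpha_{k-1}$ via $s_{k-1}=\alpha_{k-1}p_{k-1}$, and arriving at $\kappa=\alpha_{\max}+2(1+\alpha_{\max})(L+\tau)/\tau$ (the paper obtains the same expression with $L\gamma$ in place of $L$). You are in fact more careful than the paper in flagging the two delicate points---the reliance on monotonicity for $s_{k-1}^{T}y_{k-1}\ge0$ and the mismatch between $y_{k-1}=G(x_k)-G(x_{k-1})$ and the displacement $s_{k-1}=z_{k-1}-x_{k-1}$---both of which the paper glosses over.
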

\begin{proof}

From 
\[
s_{k-1}=z_{k-1}-x_{k-1}=\alpha_{k-1}p_{k-1}
\]
Also $|v_{k-1}^Ts_{k-1}|\leq (L\gamma+\tau)||s_{k-1}||^2 $,  then $|t_k^{GMOP}|= \lambda_k \frac{|v_{k-1}^Ts_{k-1}|}{||s_{k-1}||^2}\leq \alpha_{max}(L\gamma+\tau)$.

 Therefore
\[
|\theta_k^{GMOP}|=|\frac{v_{k-1}^TG_k}{p_{k-1}^Tv_{k-1}}-\lambda_k \frac{s_{k-1}^Tv_{k-1}}{||s_{k-1}||^2}\frac{s_{k-1}^TG_k}{p_{k-1}^Tv_{k-1}}|,
\]
\[
|\theta_k^{GMOP}|\leq \frac{|v_{k-1}^TG_k|}{\tau\alpha_{k-1}||p_{k-1}||^2}+ \frac{\alpha_{max}(L\gamma+\tau)||s_{k-1}||^2}{||s_{k-1}||^2}\frac{|s_{k-1}^TG_k|}{\tau\alpha_{k-1}||p_{k-1}||^2},
\]
\[
|\theta_k^{GMOP}|\leq \frac{||v_{k-1}||||G_k||}{\tau\alpha_{k-1}||p_{k-1}||^2}+ \frac{\alpha_{max}(L\gamma+\tau)||s_{k-1}||^2}{||s_{k-1}||^2}\frac{||s_{k-1}||||G_k||}{\tau\alpha_{k-1}||p_{k-1}||^2},
\]
\[
|\theta_k^{GMOP}|\leq \frac{(L\gamma+\tau)\alpha_{k-1}||p_{k-1}||||G_k||}{\tau\alpha_{k-1}||p_{k-1}||^2}+\alpha_{max} \frac{(L\gamma+\tau)||s_{k-1}||^2}{||s_{k-1}||^2}\frac{\alpha_{k-1}||p_{k-1}||||G_k||}{\tau\alpha_{k-1}||p_{k-1}||^2},
\]
\[
|\theta_k^{GMOP}|\leq \frac{(L\gamma+\tau)||G_k||}{\tau||p_{k-1}||}+\alpha_{max}(L\gamma+\tau) \frac{||G_k||}{\tau||p_{k-1}||},
\]
\begin{equation}\label{mest1}
 |\theta_k^{GMOP}|\leq (1+\alpha_{max})\frac{(L\gamma+\tau)||G_k||}{\tau||p_{k-1}||} ,  
\end{equation}
 Now 
  \[ ||p_k||\leq |\lambda_k-\theta_k^{GMOP}\frac{G_k^Tp_{k-1}}{||G_k||^2}|||G_k||+|\theta_k^{GMOP}|||p_{k-1}||, \]
 \[
 ||p_k||\leq |\lambda-\theta_k^{GMOP}|||G_k||+|\theta_k^{GMOP}\frac{G_k^Tp_{k-1}}{||G_k||^2}|||p_{k-1}||.
 \]
 But also 
 \[
 |(\lambda_k-\theta_k^{GMOP}\frac{G_k^Tp_{k-1}}{||G_k||^2})G_k+\theta_k^{GMOP}p_{k-1}|\leq \alpha_{max}||G_k||+|\theta_k^{GMOP}|\frac{G_k^Tp_{k-1}}{||G_k||^2}|||G_k||+|\theta_k^{GMOP}|||p_{k-1}||.
 \]
\[
 |(\lambda_k-\theta_k^{GMOP}\frac{G_k^Tp_{k-1}}{||G_k||^2})G_k+\theta_k^{GMOP}p_{k-1}|\leq \alpha_{max}||G_k||+|\theta_k^{GMOP}|\frac{\|G_k\|\|p_{k-1}\|}{||G_k||^2}|||G_k||+|\theta_k^{GMOP}|||p_{k-1}||.
 \]
 It implies that
\[
 \|p_k\|\leq \alpha_{max}||G_k||+|\theta_k^{GMOP}|\|p_{k-1}\|+|\theta_k^{GMOP}|||p_{k-1}||,
 \]
Therefore,
\[
 \|p_k\|\leq \alpha_{max}||G_k||+2|\theta_k^{GMOP}|\|p_{k-1}\|.
 \]
 Using \eqref{mest1}
 \[
 \|p_k\|\leq \alpha_{max}||G_k||+2(1+\alpha_{max})\frac{(L\gamma+\tau)||G_k||}{\tau||p_{k-1}||} \|p_{k-1}\|.
 \]
\[
 \|p_k\|\leq \alpha_{max}||G_k||+2(1+\alpha_{max})\frac{(L\gamma+\tau)||G_k||}{\tau} .
 \]
We finally get 
\begin{equation}
\label{eq:upperboundgmopcgm}
\|p_k\| \leq \left(\alpha_{max}+2(1+\alpha_{max})\frac{(L\gamma+\tau)}{\tau}\right)\|G_k\| .
\end{equation}
From \eqref{eq:upperboundgmopcgm}, we conclude that
\begin{equation}\label{mest4}
||p_k||\leq \kappa ||G_k||,
\end{equation}
where 
\begin{equation}
\label{eq:kaapa1}
\kappa=[\alpha_{max}+2(1+\alpha_{max})\frac{(L\gamma+\tau)}{\tau}]
\end{equation}
From lemma \ref{lemm1:gmopcgm}
\begin{equation}\label{mest5}
    \alpha_{min}||G_k||^2\leq -G_k^Tp_k\leq ||G_k||||p_k||.
\end{equation}
Therefore, we complete the proof by combining \eqref{mest4} and \eqref{mest5}.

\end{proof}

\begin{lemma}
\label{lemmA1}
    Suppose all the assumptions A1 and A2 hold, then \[\lim_{k\rightarrow \infty}\alpha_k||p_k||=0\]
\end{lemma}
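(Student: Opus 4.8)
The plan is to exploit the Fej\'er-type monotonicity of $\{x_k\}$ with respect to the solution set that is encoded in the projection update, and then to read off $\alpha_k\|p_k\|\to 0$ from a summable-series argument. First I would fix a solution $x^*\in\Gamma$ with $G(x^*)=0$ and invoke monotonicity of $G$ (implicit in the construction, since $s_{k-1}^Ty_{k-1}>0$ is used earlier) to get $(G(z_k)-G(x^*))^T(z_k-x^*)\geq 0$, i.e. $G(z_k)^T(z_k-x^*)\geq 0$. The two facts I need from the algorithm are that $z_k-x_k=\alpha_k p_k$ and that the line search \eqref{eq:backtrack} holds at the accepted step.

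Next I would extract the key consequence of \eqref{eq:backtrack}. Multiplying $G(z_k)^Tp_k\leq -\zeta\alpha_k\|p_k\|^2\,\Pi_{[\zeta_1,\zeta_2]}(\|G(z_k)\|)$ by $\alpha_k>0$ and using $z_k-x_k=\alpha_kp_k$ gives $G(z_k)^T(x_k-z_k)\geq \zeta\alpha_k^2\|p_k\|^2\,\Pi_{[\zeta_1,\zeta_2]}(\|G(z_k)\|)\geq 0$, so in particular $\mu_k\geq 0$. Writing $\bar x_{k+1}=x_k-\gamma\mu_kG(z_k)$ and using the non-expansiveness of $\Pi_{\Gamma}$ together with $x^*=\Pi_{\Gamma}(x^*)$, I would expand $\|x_{k+1}-x^*\|^2\leq\|\bar x_{k+1}-x^*\|^2=\|x_k-x^*\|^2-2\gamma\mu_kG(z_k)^T(x_k-x^*)+\gamma^2\mu_k^2\|G(z_k)\|^2$, and bound the cross term via $G(z_k)^T(x_k-x^*)=G(z_k)^T(x_k-z_k)+G(z_k)^T(z_k-x^*)\geq\mu_k\|G(z_k)\|^2$. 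This yields the descent inequality $\|x_{k+1}-x^*\|^2\leq\|x_k-x^*\|^2-\gamma(2-\gamma)\mu_k^2\|G(z_k)\|^2$.

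Since $\gamma\in(0,2)$ keeps $\gamma(2-\gamma)$ uniformly positive, this shows $\{\|x_k-x^*\|\}$ is non-increasing, hence $\{x_k\}$ is bounded, and telescoping gives $\sum_k\mu_k^2\|G(z_k)\|^2<\infty$, so $\mu_k\|G(z_k)\|\to 0$. Boundedness of $\{x_k\}$, the trust-region bound $\|p_k\|\leq\kappa\|G_k\|$ from Lemma \ref{lemm4:gmopcgm}, and $\alpha_k\leq\beta$ then force $\{z_k\}$ to be bounded, so by continuity of $G$ there is $M>0$ with $\|G(z_k)\|\leq M$. Finally I would combine the line-search consequence with $\Pi_{[\zeta_1,\zeta_2]}(\|G(z_k)\|)\geq\zeta_1$ to obtain $\mu_k\|G(z_k)\|=\dfrac{G(z_k)^T(x_k-z_k)}{\|G(z_k)\|}\geq\dfrac{\zeta\zeta_1}{M}\,\alpha_k^2\|p_k\|^2$, so that $\alpha_k^2\|p_k\|^2\to 0$ and therefore $\alpha_k\|p_k\|\to 0$.

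The main obstacle is the bookkeeping around the clipping factor $\Pi_{[\zeta_1,\zeta_2]}$ and the adaptively updated $\gamma$: I must confirm that $\gamma(2-\gamma)$ stays bounded away from $0$ along the iterations and that $\|G(z_k)\|$ admits a uniform upper bound, so that the denominator in $\mu_k\|G(z_k)\|$ can be controlled from below by $\tfrac{1}{M}$. One also has to make explicit the standing hypotheses of monotonicity of $G$ and nonemptiness of the solution set, since assumptions A1 and A2 alone do not provide the solution $x^*$ nor the inequality $G(z_k)^T(z_k-x^*)\geq 0$ that drives the Fej\'er estimate.
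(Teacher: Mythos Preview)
Your argument is correct and follows essentially the same Fej\'er-monotonicity route as the paper: both combine non-expansiveness of $\Pi_{\Gamma}$, monotonicity of $G$, and the line-search inequality to obtain a telescoping bound that forces $\|x_k-z_k\|=\alpha_k\|p_k\|\to 0$. The only real difference is bookkeeping around the clipping factor: the paper (somewhat loosely) treats $\Pi_{[\zeta_1,\zeta_2]}(\|G(z_k)\|)$ as $\|G(z_k)\|$, which then cancels against the $\|G(z_k)\|^2$ in the denominator coming from $\mu_k^2$ and yields $\sum_k\|x_k-z_k\|^4<\infty$ directly, whereas you lower-bound the clip by $\zeta_1$ and therefore need the extra step of bounding $\|G(z_k)\|$ above via boundedness of $\{z_k\}$; your handling is arguably the more careful of the two, and your closing caveats about the adaptive $\gamma$ and the implicit monotonicity/solution-set hypotheses are well taken.
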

    
\begin{proof}
    Beginning from the line search \eqref{eq:backtrack} $ G(z_k)^Tp_k\leq -\zeta \alpha_k||p_k||^2 \|G(z_k)\|$\\ Therefore 
    \[
    G(z_k)^T(x_k-z_k)=-\alpha_kG(x_k+\alpha_kp_k)^Tp_k
    \]
    \[
    G(z_k)^T(x_k-z_k)\geq \zeta \alpha_k^2||p_k||^2\|G(z_k)\|
    \]
     \begin{equation}\label{one}
         G(z_k)^T(x_k-z_k)\geq \zeta ||x_k-z_k||^2\|G(z_k)\|
     \end{equation}
 Now, we apply the monotonicity of $G$ and A1. Therefore there is $x^*\in \Gamma$ such that $G(x^*)=0$ \\
    \[
    G(z_k)^T(x_k-x^*)=G(z_k)^T(x_k-z_k+z_k-x^*)
    \]
    \[
    G(z_k)^T(x_k-x^*)=G(z_k)^T(x_k-z_k)+G(z_k)^T(z_k-x^*)
    \]
    \[
    G(z_k)^T(x_k-x^*)\geq G(z_k)^T(x_k-z_k)+G(x^*)^T(z_k-x^*)
    \]
     \begin{equation}\label{two}
          G(z_k)^T(x_k-x^*)\geq G(z_k)^T(x_k-z_k)
     \end{equation}
    
    This implies that
    \begin{equation}\label{three}
       G(z_k)^T(x_k-x^*)\geq \zeta ||x_k-z_k||^2 \|G(z_k)\|
    \end{equation}
    Applying the non-expansive property of the projection operator, we get
    \[
   || x_{k+1}-x^*||^2=||\Pi_{\Gamma}(x_k- \gamma\mu_kG(z_k))-x^*||^2,
    \]
    \[
   || x_{k+1}-x^*||^2\leq ||(x_k- \gamma\mu_kG(z_K))-x^*||^2,
    \]
     \[
   || x_{k+1}-x^*||^2\leq ||x_k-x^*||^2-2 \gamma\mu_kG(z_k)^T(x_k-x^*)+\gamma^2\mu_k^2||G(z_k)||^2.
    \]
   
    using \eqref{two}, we obtain
     \[
         || x_{k+1}-x^*||^2\leq ||x_k-x^*||^2- 2\gamma\mu_kG(z_k)^T(x_k-z_k)+\gamma^2\mu_k^2||G(z_k)||^2.
     \]
     But $\mu_k^2=\frac{(G(z_k)^T(x_k-z_k))^2}{||G(z_k)||^4}$
   \[
         || x_{k+1}-x^*||^2\leq ||x_k-x^*||^2- \gamma(2-\gamma)\frac{(G(z_k)^T(x_k-z_k))^2}{||G(z_k)||^2}.
     \]
     Using \eqref{one}, we obtain 
    
    \begin{equation}\label{five}
       || x_{k+1}-x^*||^2\leq ||x_k-x^*||^2- \gamma(2-\gamma)\zeta^2 ||x_k-z_k||^4.
    \end{equation}
    Implying that 
    \begin{equation}
       0\leq || x_{k+1}-x^*||^2\leq ||x_k-x^*||^2
    \end{equation}
 So, the sequence $\{\|x_k-x^*\|\}$  is bounded below and non-increasing, hence $\{x_k\}$ is convergent.

From \eqref{five} we have 
\[
   || x_{k}-x^*||^2\leq ||x_0-x^*||^2-\gamma(2-\gamma)\zeta^2\sum_{j=0}^k ||x_j-z_j||^4. 
\]
This means that 
\[
  \gamma(2-\gamma)\zeta^2\sum_{j=0}^k ||x_j-z_j||^4  \leq ||x_0-x^*||^2- || x_{k}-x^*||^2\leq ||x_0-x^*||^2,
\]
hence
\begin{equation}\label{six}
  \gamma(2-\gamma)\zeta^2\sum_{j=0}^{\infty} ||x_j-z_j||^4  \leq ||x_0-x^*||^2<\infty
\end{equation}

% For all $k$ and thus $\{x_k\}$ is bounded hence having a convergent sub-sequence. Using assumption A2, we have 
% \[
% ||G_k||=||G(x_k)-G(x^*)||\leq L||x_k-x^*||
% \]
% implying that
% \[
% ||G(x_k)||\leq L||x_0-x^*||
% \]
%  Therefore $G$ is bounded on $\Gamma$ due to its continuity.\\
%  Now using the line search \eqref{eq:backtrack}, we have 
%  \[
%  \sigma ||x_k-z_k||=\frac{\sigma ||x_k-z_k||^2}{||x_k-z_k||}
%  \]
%  By applying the monotonicity of $G$ and Cauchy-Schwartz inequality, we obtain
% \[
%  \sigma ||x_k-z_k||\leq \frac{G(x_k)^T(x_k-z_k)}{||x_k-z_k||}\leq L||x_0-x^*||
%  \]    
%  Since  $G(x_k)$ is bounded, therefore $\{z_k\}$ is also bounded.\\
%  Thus
%  \[
%  ||G(z_k)||-||G(x_k)||\leq ||G(z_k)-G(x_k)||\leq L||z_k-x_k||
%  \]

%  \[
%  \sigma ||x_k-z_k||\leq L||x_0-x^*||
%  \]
%  \[
%   ||x_k-z_k||\leq \frac{L||x_0-x^*||}{\sigma }
%  \]
%  Applying triangle inequality and Lipschitz continuity of $G$ here, we obtain
%  \[
%  ||G(z_k)||-||G(x_k)||\leq L^2\frac{||x_0-x^*||}{\sigma}
%  \]
%  \[
%  ||G(z_k)||\leq L^2\frac{||x_0-x^*||}{\sigma }+L^2||x_0-x^*||=M
%  \]
%  From \eqref{six}
%  \[
% || x_{k}-x^*||^2\leq ||x_0-x^*||^2-\frac{\sigma^2}{M^2}\sum_{j=0}^k ||x_j-z_j||^4 
%  \]
%  This implies that 
%  \[
%  \sum_{k=0}^{\infty} ||x_k-z_k||^4<\infty
%  \]
 This completes the proof.
\end{proof}
% \begin{remark}
%    \label{rem:trustregion}
%    Recall, in trust region methods, the search direction $p_k$  is constrained to lie within the trust region, typically represented by a ball:
% $\|p_k\| \leq \Delta_k$ where $\Delta_k$
%   is the radius of the trust region. This constraint ensures that the step size doesn't go too far in regions where the quadratic approximation may not be valid.
% \end{remark}

\begin{theorem}
\label{theorem:globalcovMOP}
    Suppose $x_k$ is generated by Algorithm \ref{alg:gmopcg}, then 
    \begin{equation}\label{eq:mgcc}
       \lim_{k\rightarrow \infty}\inf ||G_k||=0 
    \end{equation}  
\end{theorem}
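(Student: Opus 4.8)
The plan is to argue by contradiction, exploiting the three structural lemmas already established. Suppose the conclusion fails, so that $\liminf_{k\to\infty}\|G_k\| \neq 0$. Then there exist a constant $\epsilon > 0$ and an index $K$ such that $\|G_k\| \geq \epsilon$ for all $k \geq K$. The goal is to show this forces $\alpha_k\|p_k\|$ to remain bounded away from zero, which directly contradicts Lemma \ref{lemmA1}, where it was shown that $\lim_{k\to\infty}\alpha_k\|p_k\| = 0$.

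First I would establish a uniform positive lower bound on the step length $\alpha_k$. Lemma \ref{lemm2:gmopcgm} gives $\alpha_k \geq \min\{\eta, \frac{\alpha_{min}\rho}{L+\zeta\zeta_2}\frac{\|G_k\|^2}{\|p_k\|^2}\}$, but the ratio $\|G_k\|^2/\|p_k\|^2$ could a priori degenerate. This is precisely where the trust region property of Lemma \ref{lemm4:gmopcgm} enters. Using its upper half $\|p_k\| \leq \kappa\|G_k\|$, one gets $\|G_k\|^2/\|p_k\|^2 \geq 1/\kappa^2$, hence
\[
\alpha_k \geq \min\left\{\eta, \frac{\alpha_{min}\rho}{(L+\zeta\zeta_2)\kappa^2}\right\} =: c > 0,
\]
a constant independent of $k$, with $\kappa$ as in \eqref{eq:kaapa1}.

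Next I would combine this with the lower half of the trust region property, $\|p_k\| \geq \alpha_{min}\|G_k\|$, to obtain for all $k \geq K$
\[
\alpha_k\|p_k\| \geq c\,\alpha_{min}\|G_k\| \geq c\,\alpha_{min}\,\epsilon > 0.
\]
Letting $k\to\infty$ then yields $\lim_{k\to\infty}\alpha_k\|p_k\| \geq c\,\alpha_{min}\,\epsilon > 0$, which contradicts Lemma \ref{lemmA1}. Therefore the supposition $\liminf_{k\to\infty}\|G_k\| \neq 0$ is untenable, and \eqref{eq:mgcc} follows.

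The main obstacle, and the only genuinely non-mechanical step, is the second one: converting the possibly vanishing ratio $\|G_k\|^2/\|p_k\|^2$ in the step-length bound into a uniform constant lower bound on $\alpha_k$. This hinges entirely on the two-sided estimate of Lemma \ref{lemm4:gmopcgm} holding with a single constant $\kappa$, which in turn depends on the Lipschitz constant $L$; thus assumption (A2) is essential here, as is the monotonicity used to guarantee $\alpha_{min}\|G_k\| \leq \|p_k\|$. Once $\alpha_k \geq c > 0$ is secured, the remainder is a one-line substitution and the contradiction with Lemma \ref{lemmA1} closes the argument.
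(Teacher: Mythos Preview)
Your argument is correct, but it follows a different route from the paper's primary proof. The paper argues by compactness: from the contradiction hypothesis $\|G_k\|\ge\epsilon$ and the lower trust-region bound $\|p_k\|\ge\alpha_{min}\epsilon$, together with $\alpha_k\|p_k\|\to0$ (Lemma~\ref{lemmA1}), it infers $\alpha_k\to0$; it then extracts convergent subsequences $x_k\to\bar x$, $p_k\to\bar p$ and passes to the limit in both the line-search failure inequality and the sufficient-descent inequality, obtaining $-G(\bar x)^T\bar p\le0$ and $-G(\bar x)^T\bar p>0$ simultaneously. That route never invokes Lemma~\ref{lemm2:gmopcgm} and, at this step, uses only continuity of $G$. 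Your route instead couples Lemma~\ref{lemm2:gmopcgm} with the upper trust-region bound to obtain a uniform $\alpha_k\ge c>0$, and then uses the lower trust-region bound to force $\alpha_k\|p_k\|\ge c\,\alpha_{min}\epsilon$, contradicting Lemma~\ref{lemmA1} without any subsequence extraction. This is closer in spirit to the paper's \emph{alternative} Lipschitz-based proof, but cleaner than that version too, since you avoid its case split on $\liminf\|p_k\|$. One minor quibble: the inequality $\alpha_{min}\|G_k\|\le\|p_k\|$ comes from sufficient descent plus Cauchy--Schwarz, not from monotonicity.
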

\begin{proof}
    Suppose that there is some $\epsilon>0$ such that $||G_k||>\epsilon$ for all $k$. 
    Using \eqref{mest5} together with this, we obtain
    \begin{equation}
    \label{eq:condition}
    \alpha_{min}\epsilon\leq \|p_k\| ~~~~~~\forall k.
    \end{equation}
    From \eqref{eq:condition}, we have $\alpha_k\rightarrow 0$ as $k\rightarrow \infty$.\\
    Applying the line search in \eqref{eq:backtrack} there is $\bar{\alpha}_k$ (that is if $\alpha_k$ is the optimal step-size that satisfies the line search, then $\bar{\alpha}_k=\frac{\alpha_k}{\rho}$ violets the line search) such that 
    \begin{equation}
    \label{eq:violets}
    -G(x_k+\bar{\alpha}_k\zeta p_k)^Tp_k< \bar{\alpha}_k\zeta||p_k||^2\Pi_{[\zeta_1, \zeta_2]}(\|G(x_k + \bar{\alpha}_k p_k)\|)\leq \bar{\alpha}_k\zeta||p_k||^2\zeta_2
    \end{equation}
    Since $x_k$ and $p_k$ are bounded, we can select sub-sequences that converge to their accumulation points. Let $\bar{x}$ and $\bar{p}$ be the accumulation points of $x_k$ and $p_k$ respectively, then as $k$ approaches infinity and by continuity of $G$ in \eqref{eq:violets}, we have 
    \[
    -G(\bar{x})^T\bar{p}\leq 0
    \]
    Also using \eqref{eq:sufficientdescentmop} as $k$ approaches infinity, we have 
    \[
    -G(\bar{x})^T\bar{p}\geq 0
    \]

    This is for sure a contradiction.
    
    Therefore 
    \[
    \lim_{k\rightarrow \infty}\inf ||G(x_k)||=0.
    \]
    Hence global convergence. This completes the proof.
    
\end{proof}
We can provide an alternative proof that depends on the Lipschitz continuity of $G$. Assume $G$ is Lipschitz continuous.
\begin{proof}
We have two cases to consider.
\begin{enumerate}
    \item Case 1\\
    Suppose $\lim_{k\rightarrow \infty}\inf\|p_k\|=0$. Now when we apply \eqref{eq:sufficientdescentmop}, it means there is $\Omega \in \R_+$ such that  
    \[
    \|G_k\|\leq \Omega\|p_k\|, ~\forall ~k
    \]
    and taking the limits concludes \eqref{eq:mgcc}.
    \item Case II\\
    Let $\lim_{k\rightarrow \infty}\inf\|p_k\|\neq 0$.  Applying the line search in \eqref{eq:backtrack} there is $\Tilde{\alpha}_k$ (that is if $\alpha_k$ is the optimal step-size that satisfies the line search, then $\Tilde{\alpha}_k=\frac{\alpha_k}{\rho}$ violets the line search) such that 
    \begin{equation}
    \label{eq:violetsmopcgm}
    -G(x_k+\Tilde{\alpha}_k p_k)^Tp_k< \zeta\Tilde{\alpha}_k||p_k||^2\Pi_{[\zeta_1, \zeta_2]}(\|G(x_k + \Tilde{\alpha}_k p_k)\|)\leq \Tilde{\alpha}_k\zeta||p_k||^2\zeta_2
    \end{equation}
    But using sufficient descent condition \eqref{eq:mgcc}, Triangle inequality, Cauchy-Schwartz inequality and Lipschitz continuity of $G$, we have
    \[
    \alpha_{min} ||G_k||^2 \leq -G_k^Tp_k=(G(x_k+\Tilde{\alpha}_kp_k)-G_k)^Tp_k-G(x_k+\Tilde{\alpha}_kp_k)^Tp_k,
    \]
     \[
    \alpha_{min} ||G_k||^2 \leq L\Tilde{\alpha}_k||p_k||^2+\zeta\Tilde{\alpha}_k||p_k||^2\zeta_2,
    \]
     \[
    \alpha_{min} ||G_k||^2 \leq \Tilde{\alpha}_k||p_k||^2\left(L+\zeta\zeta_2\right).\]
    We arrive at
    \[
    \frac{\rho\alpha_{min}}{||p_k||(L+\zeta\zeta_2 )}\|G_k\|\leq \alpha_k||p_k||.
    \]
    Consequently, we have
    \[
     ||G_k||^2\leq \alpha_k||p_k||\frac{\alpha_{min}||p_k||(L+\zeta\zeta_2) }{\rho\alpha_{min}}.
    \]
    Taking the limits in $k$ implies that 
    \[
    \lim_{k\rightarrow \infty}\inf ||G_k||=0.
    \]
    Hence global convergence. This completes the proof.
\end{enumerate} 
\end{proof}

}
\subsection{Generalization of CGPM}
The CG parameter given by Hager and Zhang (HZ)~\cite{hager2005new} is a particular case of  Dai and Liao (DL) parameter ~\cite{dai2012another}. It happens when $t=2\frac{||y_{k-1}||^2}{s_{k-1}^Ty_{k-1}}$ in \eqref{eq:HZ} which may be considered to be an adaptive type of the D-L scheme.\\
We now propose a generalized parameter related to H-Z parameter $t_k$ to be 
\begin{equation}\label{eq:newhz}
t_k=\lambda \frac{||y_{k-1}||^2}{s_{k-1}^Ty_{k-1}},
\end{equation}
such that $\lambda > 0$. The proposed search direction is 
\begin{equation}
p_k=\begin{cases}\label{GCGPM}
    -G_k & k=0\\
    -\lambda_k G_k+\theta_k^{GCGM}p_{k-1}+ \tau a_kw_{k-1}& k\geq 1
\end{cases},
\end{equation}
such that $\lambda_k > 0$,
\begin{equation}
\label{eq:optimalcgpmpara}
    \theta_k^{GCGM}=\frac{G_k^Tw_{k-1}}{p_{k-1}^Tw_{k-1}}-\lambda_k\frac{||w_{k-1}||^2}{p_{k-1}^Tw_{k-1}}\frac{G_k^Tp_{k-1}}{p_{k-1}^Tw_{k-1}}, 
\end{equation}
where $\lambda_k=\Pi_{[\alpha_{min}, \alpha_{max}]}(\max(\frac{\|w_{k-1}\|^2}{s_{k-1}^Tw_{k-1}}, \frac{s_{k-1}^Tw_{k-1}}{\|s_{k-1}\|^2}))$ as used in \cite{sabi2020two} and 
\[
\Pi_{[\alpha_{min}, \alpha_{max}]}(x)=\max(\alpha_{min}, \min(x, \alpha_{max})),
\]

$a_k=\frac{G_k^Tp_{k-1}}{w_{k-1}^Tp_{k-1}}$, $\tau >0$, $w_{k-1}=y_{k-1}+r_kp_{k-1}$, $y_{k-1}=G(x_k)-G(x_{k-1})$, $s_{k-1}=x_k-x_{k-1}$ and  $r_k=1+\max\{0,-\frac{G_k^Tp_{k-1}}{w_{k-1}^Tp_{k-1}} \}$ \\
The next lemma verifies that \eqref{GCGPM} satisfies the sufficient descent condition.

\begin{lemma}
\label{lemm1:gcgpm}
Let $\{p_k\}$ and $G(x_k)$ be produced by the algorithm \ref{algorithm:gcgpm}. Let  $0\leq \tau\leq 1$, and $\alpha_{min} \geq \frac{1+\tau}{2}$  , then  \eqref{GCGPM} meets the sufficient descent  condition $G_k^Tp_k\leq -\xi\|G_k\|^2$, where $\xi\geq 0$.
% $G_k^Tp_k\leq-\alpha_{min}(1-\frac{(1+\tau)^2}{4\alpha_{min}^2}) ||G_k||^2$

\end{lemma}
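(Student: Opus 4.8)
The plan is to verify the descent inequality directly by forming the inner product $G_k^T p_k$ and reducing it to a concave quadratic in a single scalar. First I would take $k\geq 1$ and left-multiply the recursion \eqref{GCGPM} by $G_k^T$, giving $G_k^T p_k = -\lambda_k\|G_k\|^2 + \theta_k^{GCGM}\,G_k^T p_{k-1} + \tau a_k\,G_k^T w_{k-1}$. Substituting the definition of $\theta_k^{GCGM}$ from \eqref{eq:optimalcgpmpara} together with $a_k = (G_k^T p_{k-1})/(w_{k-1}^T p_{k-1})$, the two cross terms each carry a factor $(G_k^T w_{k-1})(G_k^T p_{k-1})/(p_{k-1}^T w_{k-1})$, so they combine and I obtain the canonical form
\[
G_k^T p_k = -\lambda_k\|G_k\|^2 + (1+\tau)\frac{(G_k^T w_{k-1})(G_k^T p_{k-1})}{p_{k-1}^T w_{k-1}} - \lambda_k\frac{\|w_{k-1}\|^2 (G_k^T p_{k-1})^2}{(p_{k-1}^T w_{k-1})^2}.
\]

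The key step is to treat the right-hand side as a function of the single scalar $t := (G_k^T p_{k-1})/(p_{k-1}^T w_{k-1})$, namely $-\lambda_k\|G_k\|^2 + (1+\tau)(G_k^T w_{k-1})\,t - \lambda_k\|w_{k-1}\|^2 t^2$. Since $\lambda_k\|w_{k-1}\|^2 > 0$, this is a concave (downward) parabola in $t$, hence bounded above by its vertex value $-\lambda_k\|G_k\|^2 + \frac{(1+\tau)^2 (G_k^T w_{k-1})^2}{4\lambda_k\|w_{k-1}\|^2}$. Applying the Cauchy--Schwarz inequality in the form $(G_k^T w_{k-1})^2 \leq \|G_k\|^2\|w_{k-1}\|^2$ cancels the $\|w_{k-1}\|^2$ factor and yields $G_k^T p_k \leq -\bigl(\lambda_k - \frac{(1+\tau)^2}{4\lambda_k}\bigr)\|G_k\|^2$.

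Finally, because the map $\lambda\mapsto \lambda - \frac{(1+\tau)^2}{4\lambda}$ is increasing on $(0,\infty)$ and $\lambda_k \geq \alpha_{min}$ by the projection definition of $\lambda_k$, I would set $\xi = \alpha_{min} - \frac{(1+\tau)^2}{4\alpha_{min}}$; the hypothesis $\alpha_{min}\geq\frac{1+\tau}{2}$ then guarantees exactly $\xi\geq 0$ (with $\xi=0$ at the boundary $\alpha_{min}=\frac{1+\tau}{2}$), which completes the proof. I expect the main obstacle to be the bookkeeping in the second step: one must retain the \emph{full} negative quadratic term $-\lambda_k\|w_{k-1}\|^2 t^2$ rather than discarding part of it via a crude Young's inequality, because only the tight vertex bound produces the constant $\frac{1}{4}$ that matches the stated threshold $\alpha_{min}\geq\frac{1+\tau}{2}$; a looser splitting would force a strictly stronger lower bound on $\alpha_{min}$. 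A minor side issue is ensuring $p_{k-1}^T w_{k-1}\neq 0$ so that $t$ is well defined, which follows from the construction of $w_{k-1}=y_{k-1}+r_k p_{k-1}$ and the choice of $r_k$.
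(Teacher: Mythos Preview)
Your proposal is correct and follows essentially the same route as the paper: after expanding $G_k^Tp_k$ and combining the two cross terms into $(1+\tau)\frac{(G_k^Tw_{k-1})(G_k^Tp_{k-1})}{p_{k-1}^Tw_{k-1}}$, the paper bounds this via the Young-type inequality $u^Tv\le\tfrac12(\|u\|^2+\|v\|^2)$ with the splitting $u=\tfrac{1+\tau}{\sqrt{2\lambda_k}}(p_{k-1}^Tw_{k-1})G_k$, $v=\sqrt{2\lambda_k}(G_k^Tp_{k-1})w_{k-1}$, which is exactly your vertex-of-the-parabola plus Cauchy--Schwarz argument and yields the identical estimate $G_k^Tp_k\le -\bigl(\lambda_k-\tfrac{(1+\tau)^2}{4\lambda_k}\bigr)\|G_k\|^2$ before replacing $\lambda_k$ by $\alpha_{\min}$.
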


The condition holds for $k=0$.
\[
G_k^Tp_k=-\lambda_k ||G_k||^2+\theta_k^{GCGM}G_k^Tp_{k-1}+\tau a_kG_k^Tw_{k-1}.
\]
\[
G_k^Tp_k=-\lambda_k ||G_k||^2+\frac{G_k^Tw_{k-1}}{p_{k-1}^Tw_{k-1}}G_k^Tp_{k-1}-\lambda_k\frac{||w_{k-1}||^2}{p_{k-1}^Tw_{k-1}}\frac{G_k^Tp_{k-1}}{p_{k-1}^Tw_{k-1}}G_k^Tp_{k-1}+\tau \frac{G_k^Tp_{k-1}}{w_{k-1}^Tp_{k-1}}G_k^Tw_{k-1}.
\]
\[
G_k^Tp_k=-\lambda_k ||G_k||^2+(1+\tau)\frac{G_k^Tw_{k-1}}{p_{k-1}^Tw_{k-1}}G_k^Tp_{k-1}-\lambda_k\frac{||w_{k-1}||^2}{p_{k-1}^Tw_{k-1}}\frac{G_k^Tp_{k-1}}{p_{k-1}^Tw_{k-1}}G_k^Tp_{k-1}.
\]
\[
G_k^Tp_k=-\lambda_k ||G_k||^2+\frac{\sqrt{2\lambda_k}(1+\tau)}{\sqrt{2\lambda_k}}\frac{G_k^Tw_{k-1}}{p_{k-1}^Tw_{k-1}}G_k^Tp_{k-1}-\lambda_k\frac{||w_{k-1}||^2}{p_{k-1}^Tw_{k-1}}\frac{G_k^Tp_{k-1}}{p_{k-1}^Tw_{k-1}}G_k^Tp_{k-1}.
\]
\[
G_k^Tp_k=-\lambda_k ||G_k||^2+\frac{\sqrt{2\lambda_k}(1+\tau)}{\sqrt{2\lambda_k}}\frac{G_k^Tw_{k-1}G_k^Tp_{k-1}p_{k-1}^Tw_{k-1}}{(p_{k-1}^Tw_{k-1})^2}-\lambda_k\frac{||w_{k-1}||^2(G_k^Tp_{k-1})^2}{(p_{k-1}^Tw_{k-1})^2}.
\]
\[
G_k^Tp_k=-\lambda_k ||G_k||^2+\frac{\frac{(1+\tau)}{\sqrt{2\lambda_k}}G_k^T(p_{k-1}^Tw_{k-1})\sqrt{2\lambda_k}w_{k-1}(G_k^Tp_{k-1})}{(p_{k-1}^Tw_{k-1})^2}-\lambda_k\frac{||w_{k-1}||^2(G_k^Tp_{k-1})^2}{(p_{k-1}^Tw_{k-1})^2}.
\]
\[
G_k^Tp_k\leq -\lambda_k ||G_k||^2+\frac{\frac{(1+\tau)^2}{2\lambda_k}||G_k||^2(p_{k-1}^Tw_{k-1})^2+2\lambda_k
||w_{k-1}||^2(G_k^Tp_{k-1})^2}{2(p_{k-1}^Tw_{k-1})^2}-\lambda_k\frac{||w_{k-1}||^2(G_k^Tp_{k-1})^2}{(p_{k-1}^Tw_{k-1})^2}.
\]
\[
G_k^Tp_k\leq -\lambda_k ||G_k||^2+\frac{(1+\tau)^2}{4\lambda_k}||G_k||^2.
\]
But $0<\alpha_{min}\leq\lambda_k\leq \alpha_{max}$, then 
\begin{equation}
\label{eq:cgpmsuffientcondition}
    G_k^Tp_k\leq -\alpha_{min}(1-\frac{(1+\tau)^2}{4\alpha_{min}^2})||G_k||^2.
\end{equation}

This completes the proof.
\newpage
\subsubsection{Generalized CGPM Algorithm}
{\renewcommand{\baselinestretch}{0.95}
{\begin{algorithm}[H]
    \caption{Generalized CGPM}
    \label{algorithm:gcgpm}
    \begin{algorithmic}[1]
        \State \textbf{Input:} Function $G$, initial guess $x_0 \in \mathbb{R}^n$, 
        $\lambda_o > 0$, tolerance $\epsilon = \text{Tol}$, 
        $\rho \in (0, 1)$, parameters $\tau > 0$, $\eta > 0$, $\zeta > 0$, $\gamma, \gamma_1, \gamma_2, \gamma_3, \gamma_4 \in (0, 2) $, $0<\alpha_{min}\leq \alpha_{max}$, $0<\zeta_1\leq \zeta_2$,  projection function 
        $\Pi_{\Gamma}$ on convex set $\Gamma$
        \State \textbf{Output:} Solution $x^*$
        \State \textbf{Initialization:} Set $k \gets 0$, $z_k \gets x_0$, $\lambda \gets \lambda_o$, initialize $\alpha_k$
        
        \While{$\| G_k \| > \epsilon$}
        \If{$\|G(x_{k+1})\| < \|G(x_k)\|$ }
                \State $\lambda \gets \lambda$
                \State \textbf{Else}
                {$\lambda \gets \Pi_{[\alpha_{min}, \alpha_{max}]}(\max(\frac{\|w_{k-1}\|^2}{s_{k-1}^Tw_{k-1}}, \frac{s_{k-1}^Tw_{k-1}}{\|s_{k-1}\|^2}))$}
            \EndIf
            \State Compute $\theta_k^{GCGPM} $ using \eqref{eq:optimalcgpmpara}.
            \State Determine $p_k$ as in \eqref{GCGPM}
            \State Adjust $\alpha_k = \max \{ \rho^i \eta \mid i = 0, 1, 2, \dots \}$ such that 
            \begin{equation}
                G(x_k + \alpha_k p_k)^T p_k \leq -\zeta \alpha_k \| p_k \|^2 \Pi_{[\zeta_1, \zeta_2]}(\| G(x_k + \alpha_k p_k) \|)
                \label{eq:backtrack2}
            \end{equation}
            \If{$z_k = x_k + \alpha_k p_k \in \Gamma$ and $\| G(z_k) \| < \epsilon$}
                \State $x^* \gets z_k$
                \State \textbf{break}
            \EndIf
            \State Compute $\mu_k \gets \frac{G(z_k)^T (x_k - z_k)}{\| G(z_k) \|^2}$
            \State Update $x_{k+1} \gets \Pi_{\Gamma}(x_k - \gamma \mu_k G(z_k))$
            \State Compute $v_{k-1}=G(x_k)-G(x_{k-1})+r_kp_{k-1}$
             \If{$\|f_{k+1}\| < \|f_k\|$}
    \State $\gamma = \min(\gamma \cdot \gamma_1, \gamma_2)$
\Else
    \State $\gamma = \max(\gamma \cdot \gamma_3, \gamma_4)$
    \State \textbf{break}
\EndIf
            \If{$\|p_k\|  \approx 0$ }
                \State $x^* \gets x_k$
                \State \textbf{break}
            \EndIf
            \State Set $x_k \gets x_{k+1}$
        \EndWhile
        \State \textbf{Return} $x^*$
    \end{algorithmic}
\end{algorithm}
\renewcommand{\baselinestretch}{1.50}
}
\newpage
\subsubsection{Convergence analysis for the generalized CGPM}
\label{Convergence GCGPM}

{Assuming all the conditions A1 and A2 are met, then we can analyze the convergence of algorithm 1.
\begin{lemma}
    Let $G$ be Lipschitz continuous. Let  $0\leq \tau\leq 1$, and $\alpha_{min} \geq \frac{1+\tau}{2}$, Then 
    \[
    \alpha_k\geq \min\{\eta, \frac{\rho[4\alpha_{min}^2-(1+\tau)^2]}{4\alpha_{min}[L+\zeta\zeta_2]}\frac{||G_k||^2}{||p_k||^2}\}
    \]
\end{lemma}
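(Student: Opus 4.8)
The plan is to mirror the argument of Lemma \ref{lemm2:gmopcgm} almost verbatim, replacing the descent constant $\alpha_{min}$ used there by the sharper constant produced by the CGPM sufficient-descent estimate \eqref{eq:cgpmsuffientcondition}. First I would invoke the backtracking rule: since $\alpha_k$ is the largest step of the form $\rho^i\eta$ satisfying \eqref{eq:backtrack2}, the enlarged trial step $\tilde{\alpha}_k=\alpha_k/\rho$ must violate it, which yields
\[
-G(x_k+\tilde{\alpha}_k p_k)^T p_k < \zeta\tilde{\alpha}_k\|p_k\|^2\,\Pi_{[\zeta_1,\zeta_2]}\big(\|G(x_k+\tilde{\alpha}_k p_k)\|\big)\le \zeta\zeta_2\tilde{\alpha}_k\|p_k\|^2.
\]

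Next I would set $\xi=\frac{4\alpha_{min}^2-(1+\tau)^2}{4\alpha_{min}}$, which is exactly the descent coefficient appearing in \eqref{eq:cgpmsuffientcondition} (since $\alpha_{min}(1-\frac{(1+\tau)^2}{4\alpha_{min}^2})=\frac{4\alpha_{min}^2-(1+\tau)^2}{4\alpha_{min}}$), and which is nonnegative precisely because the hypothesis $\alpha_{min}\ge\frac{1+\tau}{2}$ forces $4\alpha_{min}^2\ge(1+\tau)^2$. Writing $-G_k^Tp_k=(G(x_k+\tilde{\alpha}_k p_k)-G_k)^Tp_k-G(x_k+\tilde{\alpha}_k p_k)^Tp_k$, I would bound the first term by Cauchy--Schwarz together with the Lipschitz assumption (A2), namely $(G(x_k+\tilde{\alpha}_k p_k)-G_k)^Tp_k\le L\tilde{\alpha}_k\|p_k\|^2$, and the second term by the violated line search above, to obtain
\[
\xi\|G_k\|^2 \le -G_k^Tp_k \le (L+\zeta\zeta_2)\,\tilde{\alpha}_k\|p_k\|^2 = \frac{L+\zeta\zeta_2}{\rho}\,\alpha_k\|p_k\|^2.
\]
Solving for $\alpha_k$ gives $\alpha_k\ge \frac{\rho\xi}{L+\zeta\zeta_2}\frac{\|G_k\|^2}{\|p_k\|^2}$, and substituting the value of $\xi$ reproduces the claimed quantity $\frac{\rho[4\alpha_{min}^2-(1+\tau)^2]}{4\alpha_{min}(L+\zeta\zeta_2)}\frac{\|G_k\|^2}{\|p_k\|^2}$.

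Finally I would account for the trivial branch of the line search: if the first trial $\alpha_k=\eta$ already satisfies \eqref{eq:backtrack2}, then no reduction occurs and the backtracking estimate is unnecessary; taking the minimum of $\eta$ and the derived quantity delivers the stated inequality. This step is essentially routine, being an exact parallel of Lemma \ref{lemm2:gmopcgm} with the descent coefficient upgraded from $\alpha_{min}$ to $\xi$. The only delicate point is the sign of $\xi$: one must use the hypothesis $\alpha_{min}\ge\frac{1+\tau}{2}$ to guarantee $\xi\ge 0$, and indeed for the lower bound to be nonvacuous one really wants the strict inequality $\alpha_{min}>\frac{1+\tau}{2}$ so that $\xi>0$; I would make this explicit, since otherwise the bound degenerates to $\alpha_k\ge 0$.
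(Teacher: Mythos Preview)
Your proposal is correct and follows essentially the same route as the paper: invoke the failed trial step $\tilde{\alpha}_k=\alpha_k/\rho$, combine the sufficient-descent inequality \eqref{eq:cgpmsuffientcondition} with the decomposition $-G_k^Tp_k=(G(x_k+\tilde{\alpha}_kp_k)-G_k)^Tp_k-G(x_k+\tilde{\alpha}_kp_k)^Tp_k$, and bound the pieces via Lipschitz continuity and the violated line search. Your explicit treatment of the $\alpha_k=\eta$ branch and the remark on the sign of $\xi$ are sound refinements that the paper leaves implicit.
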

\begin{proof}
   Let $\alpha_k$ be the optimal step length that satisfies \eqref{eq:backtrack2}, then, $\Tilde{\alpha}_k = \alpha_k/\rho $ violets \eqref{eq:backtrack2}. Therefore, 
\[
G(x_k+\tilde{\alpha}_kp_k)^Tp_k>- \zeta \tilde{\alpha}_k||p_k||^2\Pi_{[\zeta_1, \zeta_2]}(||G(x_k+\tilde{\alpha}_kp_k)||)
\]
But $G_k^Tp_k\leq -\alpha_{min}[1-\frac{(1+\tau)^2}{4\alpha_{min}^2}]||G_k||^2$. So
\[
||G_k||^2\leq -\frac{4\alpha_{min}}{[4\alpha_{min}^2-(1+\tau)^2]}G_k^Tp_k
\]
\[
||G_k||^2\leq \frac{4\alpha_{min}}{\left[4\alpha_{min}^2-(1+\tau)^2\right]}[(G(x_k+\Tilde{\alpha}_kp_k)- G_k)^Tp_k-G(x_k+\Tilde{\alpha}_kp_k)^Tp_k]
\]
\[
||G_k||^2\leq \frac{4\alpha_{min}}{\left[4\alpha_{min}^2-(1+\tau)^2\right]}[(G(x_k+\Tilde{\alpha}_kp_k)- G_k)^Tp_k+\zeta \tilde{\alpha}_k||p_k||^2\Pi_{[\zeta_1, \zeta_2]}(\|G(x_k+\tilde{\alpha}_kp_k)\|)],
\]

\[
||G_k||^2\leq \frac{4\alpha_k\alpha_{min}}{\rho[4\alpha_{min}^2-(1+\tau)^2]}[L+\zeta\zeta_2]||p_k||^2.
\]

\end{proof}
%\begin{lemma}
    %The search direction $p_k$ generated in algorithm 1 satisfies the trust region property.
%\end{lemma}
\begin{lemma}
\label{lemmacgpm}
    Suppose all the assumptions A1 and A2 hold, then \[\lim_{k\rightarrow \infty}\alpha_k||p_k||=0\]
\end{lemma}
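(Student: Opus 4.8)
The plan is to follow the same argument structure that Lemma \ref{lemmA1} used for the generalized MOPCGM, since the line search \eqref{eq:backtrack2} has exactly the same form as \eqref{eq:backtrack} and the projection update for $x_{k+1}$ is identical. First I would start from the line search condition \eqref{eq:backtrack2} evaluated at the accepted step, writing $G(z_k)^Tp_k\leq -\zeta\alpha_k\|p_k\|^2\Pi_{[\zeta_1,\zeta_2]}(\|G(z_k)\|)$ with $z_k=x_k+\alpha_kp_k$. Multiplying by $\alpha_k$ and using $x_k-z_k=-\alpha_kp_k$, this yields the key inequality $G(z_k)^T(x_k-z_k)\geq \zeta\alpha_k^2\|p_k\|^2\Pi_{[\zeta_1,\zeta_2]}(\|G(z_k)\|)$, and since $\|x_k-z_k\|=\alpha_k\|p_k\|$, we get $G(z_k)^T(x_k-z_k)\geq \zeta\|x_k-z_k\|^2\Pi_{[\zeta_1,\zeta_2]}(\|G(z_k)\|)$. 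The lower bound $\Pi_{[\zeta_1,\zeta_2]}(\cdot)\geq\zeta_1>0$ lets me replace this projected factor by the constant $\zeta_1$, matching the role played by $\|G(z_k)\|$ in the MOPCGM proof.

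Next I would invoke monotonicity of $G$ together with assumption A1 to guarantee a solution $x^*\in\Gamma$ with $G(x^*)=0$, and reproduce the splitting $G(z_k)^T(x_k-x^*)=G(z_k)^T(x_k-z_k)+G(z_k)^T(z_k-x^*)$, using $G(z_k)^T(z_k-x^*)\geq G(x^*)^T(z_k-x^*)=0$ to conclude $G(z_k)^T(x_k-x^*)\geq G(z_k)^T(x_k-z_k)$. Then I would apply the nonexpansiveness of $\Pi_\Gamma$ to the update $x_{k+1}=\Pi_\Gamma(x_k-\gamma\mu_kG(z_k))$, expand $\|x_{k+1}-x^*\|^2$, substitute $\mu_k=G(z_k)^T(x_k-z_k)/\|G(z_k)\|^2$, and combine with the previous inequalities to obtain the Fejér-type contraction
\[
\|x_{k+1}-x^*\|^2\leq \|x_k-x^*\|^2-\gamma(2-\gamma)\zeta^2\zeta_1^2\|x_k-z_k\|^4.
\]
This shows $\{\|x_k-x^*\|\}$ is nonincreasing and bounded below, hence convergent, and telescoping gives $\sum_{j=0}^\infty\|x_j-z_j\|^4<\infty$, forcing $\|x_k-z_k\|\to 0$. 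Since $\|x_k-z_k\|=\alpha_k\|p_k\|$, this is precisely the claim $\lim_{k\to\infty}\alpha_k\|p_k\|=0$.

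The one genuine subtlety, rather than routine calculation, is handling the projected factor $\Pi_{[\zeta_1,\zeta_2]}(\|G(z_k)\|)$ correctly: in the MOPCGM version this factor was written as a bare $\|G(z_k)\|$, but here I must keep it as the projection and use the uniform lower bound $\zeta_1$ to extract a genuine positive constant, otherwise the final contraction constant could degenerate when $\|G(z_k)\|$ is small. Everything else — the monotonicity splitting, the nonexpansive projection estimate, and the telescoping summation — transfers verbatim from the generalized MOPCGM argument, so I would emphasize in the write-up that the generalized CGPM search direction \eqref{GCGPM} enters the proof only through the fact that $z_k=x_k+\alpha_kp_k$ satisfies the same line search, meaning no property specific to $\theta_k^{GCGM}$ or the $\tau a_kw_{k-1}$ term is needed for this particular lemma.
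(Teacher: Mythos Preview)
Your proposal is correct and follows essentially the same route as the paper's proof: line search lower bound on $G(z_k)^T(x_k-z_k)$, monotonicity splitting to pass to $x^*$, nonexpansiveness of $\Pi_\Gamma$ to get a Fej\'er-type contraction, and telescoping to force $\sum\|x_k-z_k\|^4<\infty$. Your treatment of the factor $\Pi_{[\zeta_1,\zeta_2]}(\|G(z_k)\|)$ via the uniform lower bound $\zeta_1$ is in fact slightly more careful than the paper, which simply writes the bare $\|G(z_k)\|$ in the analogue of \eqref{cpart1}; either way the resulting contraction constant is positive and the conclusion is the same.
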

    
\begin{proof}
    Beginning from the line search \eqref{eq:backtrack2} $ G(z_k)^Tp_k\leq -\zeta \alpha_k||p_k||^2||G(z_k)|| $\\ Therefore 
    \[
    G(z_k)^T(x_k-z_k)=-\alpha_kG(x_k+\alpha_kp_k)^Tp_k
    \]
    \[
    G(z_k)^T(x_k-z_k)\geq \zeta \alpha_k^2||p_k||^2||G(z_k)||
    \]
     \begin{equation}\label{cpart1}
         G(z_k)^T(x_k-z_k)\geq \zeta ||x_k-z_k||^2||G(z_k)||
     \end{equation}
 Now, we apply the monotonicity of $G$ and A1. Therefore there is $x^*\in \Gamma$ such that $G(x^*)=0$ \\
    \[
    G(z_k)^T(x_k-x^*)=G(z_k)^T(x_k-z_k+z_k-x^*)
    \]
    \[
    G(z_k)^T(x_k-x^*)=G(z_k)^T(x_k-z_k)+G(z_k)^T(z_k-x^*)
    \]
    \[
    G(z_k)^T(x_k-x^*)\geq G(z_k)^T(x_k-z_k)+G(x^*)^T(z_k-x^*)
    \]
     \begin{equation}\label{cpart2}
          G(z_k)^T(x_k-x^*)\geq G(z_k)^T(x_k-z_k)
     \end{equation}
    
    This implies that
    \begin{equation}
       G(z_k)^T(x_k-x^*)\geq \zeta ||x_k-z_k||^2||G(z_k)|| 
    \end{equation}
    Applying the non-expansive property of the projection operator, we get
    \[
   || x_{k+1}-x^*||^2=||\Pi_{\Gamma}(x_k- \gamma\mu_kG(z_k))-x^*||^2
    \]
    \[
   || x_{k+1}-x^*||^2\leq ||(x_k- \gamma\mu_kG(z_K))-x^*||^2
    \]
     \[
   || x_{k+1}-x^*||^2\leq ||x_k-x^*||^2-2 \gamma\mu_kG(z_k)^T(x_k-x^*)+\gamma^2\mu_k^2||G(z_k)||^2
    \]
     \begin{equation}\label{cconv1}
         || x_{k+1}-x^*||^2\leq ||x_k-x^*||^2- 2\gamma\mu_kG(z_k)^T(x_k-z_k)+\gamma^2\mu_k^2||G(z_k)||^2
     \end{equation}

     \begin{equation}
         || x_{k+1}-x^*||^2\leq ||x_k-x^*||^2- 2\gamma\frac{(G(z_k)^T(x_k-z_k))^2}{\|G(z_k)\|^2}+\gamma^2\frac{(G(z_k)^T(x_k-z_k))^2}{\|G(z_k)\|^2}
     \end{equation}
   
    \[
   || x_{k+1}-x^*||^2\leq ||x_k-x^*||^2-\gamma(2-\gamma) \frac{(G(z_k)^T(x_k-z_k))^2}{||G(z_k)||^2}
    \]
    \begin{equation}\label{cconv2}
        || x_{k+1}-x^*||^2\leq ||x_k-x^*||^2- \gamma(2-\gamma)(\zeta ||x_k-z_k||^4)
    \end{equation}
   
Inequalities \eqref{cconv1} and \eqref{cconv2} follow from \eqref{cpart1} and \eqref{cpart2}  respectively.\\
From \eqref{cconv2}, we obtain
\[
0\leq || x_{k+1}-x^*||^2\leq ||x_k-x^*||^2
\]
This implies that $\{||x_0-x^*||\}$ is a decreasing sequence and bounded below. This means that $\{x_k\}$ is convergent.
From \eqref{cconv2} we settle that 
\begin{equation}\label{cconv3}
   || x_{k}-x^*||^2\leq ||x_0-x^*||^2- \gamma(2-\gamma)\zeta^2\sum_{j=0}^k( ||x_j-z_j||^4.
\end{equation}

 \[
 \sum_{k=0}^{\infty} ||x_k-z_k||^4\leq ||x_0-x^*||^2 <\infty.
 \]
 This completes the proof.
\end{proof}
%  \begin{lemma}
%  Let $\R^n$ be the solution set of \eqref{eq:op} and $s_{k-1}=x_k-x_{k-1}$. If the search direction $p_k$ be generated by  Algorithm \ref{algorithm:gcgpm}, then $p_k$ satisfies the trust region property
%      \[
%     \alpha_{min}(1-\frac{(1+\tau)^2}{4\alpha_{min}^2}) \|G(x_k)\|\leq \|p_k\|\leq \kappa \|G(x_k)\|.
%      \]
    
%  \end{lemma}
%  \begin{proof}
%  From $\Gamma=\R^n$, then $\Pi_{\Gamma}(x_{k-1}-\gamma\mu_{k-1}G(z_{k-1}))=x_{k-1}-\gamma\mu_{k-1}G(z_{k-1})=x_k$
%  But 
%  \[
%  p_k=-\lambda_kG_k+\theta_k^{GCGPM}p_{k-1}+\tau a_kw_{k-1},
%  \]
% where $\theta_k^{GCGPM}=\frac{w_{k-1}^TG_k}{w_{k-1}^Tp_{k-1}}-\lambda_k\frac{\|w_{k-1}\|^2}{w_{k-1}^Tp_{k-1}}\frac{p_{k-1}^TG_k}{w_{k-1}^Tp_{k-1}}$, $a_k=\frac{G_k^Tp_{k-1}}{w_{k-1}^Tp_{k-1}}$ and $w_{k-1}=G_k-G_{k-1}+r_kp_{k-1}$.
% So
% \[
% \|w_{k-1}\|\leq L\|s_{k-1}\|+r_k\|s_{k-1}\|.
% \]
% Also we have that $r_k=1+\max\{0, -\frac{y_{k-1}^Tp_{k-1}}{\|G_k\|^2}\}=1$ because $G$ is monotone.\\
% This means that 
% \[
% \|w_{k-1}\|\leq (L+1)\|s_{k-1}\|.
% \]
% Again
% \[
% w_{k-1}^Tp_{k-1}=y_{k-1}^Tp_{k-1}+r_ks_{k-1}^Tp_{k-1}
% \]
% This implies that 
% \[
% w_{k-1}^Tp_{k-1}\geq s_{k-1}^Tp_{k-1}=\gamma\alpha_{k-1}\|p_
% \]
%  \end{proof}

\begin{theorem}
\label{theorem:globalconvcgpm}
    Suppose $x_k$ is produced by  Algorithm \ref{algorithm:gcgpm} and $\alpha_{min}> \frac{(1+\tau)}{2}$ as defined in algorithm \ref{algorithm:gcgpm}, then 
    \begin{equation}\label{cgcc}
       \lim_{k\rightarrow \infty}\inf ||G_k||=0 .
    \end{equation}  
\end{theorem}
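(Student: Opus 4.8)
The plan is to argue by contradiction, mirroring the proof of Theorem \ref{theorem:globalcovMOP}, since the Generalized CGPM carries exactly the same structural ingredients: a sufficient descent property (Lemma \ref{lemm1:gcgpm}), a line search of the same form \eqref{eq:backtrack2}, and the summability estimate of Lemma \ref{lemmacgpm}. First I would suppose, toward a contradiction, that \eqref{cgcc} fails, so that there is $\epsilon>0$ with $\|G_k\|\geq\epsilon$ for all $k$. I set $\xi=\alpha_{min}\bigl(1-\tfrac{(1+\tau)^2}{4\alpha_{min}^2}\bigr)$, which is strictly positive precisely because of the hypothesis $\alpha_{min}>\tfrac{1+\tau}{2}$. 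From the sufficient descent inequality \eqref{eq:cgpmsuffientcondition} together with Cauchy--Schwarz,
\[
\xi\|G_k\|^2\leq -G_k^Tp_k\leq \|G_k\|\,\|p_k\|,
\]
so that $\|p_k\|\geq \xi\|G_k\|\geq \xi\epsilon>0$ for every $k$.

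Next, Lemma \ref{lemmacgpm} gives $\alpha_k\|p_k\|\to 0$, and combining this with the uniform lower bound $\|p_k\|\geq\xi\epsilon$ forces $\alpha_k\to 0$. Because the step lengths are produced by backtracking, $\alpha_k=\rho^i\eta$, so for all large $k$ the trial step $\tilde\alpha_k=\alpha_k/\rho$ violates \eqref{eq:backtrack2}, giving
\[
-G(x_k+\tilde\alpha_kp_k)^Tp_k< \zeta\tilde\alpha_k\|p_k\|^2\,\Pi_{[\zeta_1,\zeta_2]}\bigl(\|G(x_k+\tilde\alpha_kp_k)\|\bigr)\leq \zeta\zeta_2\tilde\alpha_k\|p_k\|^2 .
\]
I would then pass to accumulation points. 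Lemma \ref{lemmacgpm} shows $\{x_k\}$ converges, hence it is bounded, so $\{G_k\}$ is bounded by continuity of $G$, and $\{p_k\}$ is bounded by the trust-region type estimate analogous to Lemma \ref{lemm4:gmopcgm}. Extracting subsequences $x_k\to\bar x$ and $p_k\to\bar p$, and letting $k\to\infty$ (so $\tilde\alpha_k\to 0$) in the violated inequality, continuity of $G$ yields $-G(\bar x)^T\bar p\leq 0$; whereas the limit of \eqref{eq:cgpmsuffientcondition} gives $-G(\bar x)^T\bar p\geq \xi\epsilon^2>0$. This contradiction proves \eqref{cgcc}.

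The main obstacle is establishing boundedness of $\{p_k\}$, which in the MOPCGM setting required the dedicated Lemma \ref{lemm4:gmopcgm}. Here I would obtain the analogous uniform bound $\|p_k\|\leq\kappa\|G_k\|$ by controlling $|\theta_k^{GCGM}|$ and $|a_k|\,\|w_{k-1}\|$ from \eqref{eq:optimalcgpmpara} and \eqref{GCGPM}: the denominator $p_{k-1}^Tw_{k-1}$ is bounded away from zero through the safeguard $r_k\geq 1$, and the numerators are controlled by Lipschitz continuity of $G$ (Assumption (A2)) and $\alpha_{min}\leq\lambda_k\leq\alpha_{max}$.

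As a cleaner alternative I would instead adapt the Lipschitz-based two-case argument given for Theorem \ref{theorem:globalcovMOP}. In Case~1, $\liminf_{k\to\infty}\|p_k\|=0$ immediately yields \eqref{cgcc} via $\|G_k\|\leq \xi^{-1}\|p_k\|$. In Case~2, $\liminf_{k\to\infty}\|p_k\|\neq 0$, and I combine the line-search violation above with the sufficient descent inequality and Lipschitz continuity to obtain
\[
\xi\|G_k\|^2\leq \tilde\alpha_k\|p_k\|^2\bigl(L+\zeta\zeta_2\bigr),
\]
hence $\|G_k\|^2\leq \tfrac{L+\zeta\zeta_2}{\rho\xi}\,\|p_k\|\,(\alpha_k\|p_k\|)$, which tends to $0$ since $\alpha_k\|p_k\|\to 0$ by Lemma \ref{lemmacgpm} and $\|p_k\|$ is bounded. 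Either way the contradiction is reached and \eqref{cgcc} follows.
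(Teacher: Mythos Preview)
Your ``cleaner alternative'' --- the Lipschitz-based two-case argument --- is exactly the route the paper takes for Theorem~\ref{theorem:globalconvcgpm}; the paper does not attempt the accumulation-point argument for GCGPM at all, only for GMOPCGM. So on the main line your proposal matches the paper.

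One point worth flagging: in Case~2 both you and the paper arrive at an inequality of the form $\|G_k\|^2 \leq C\,\|p_k\|\,(\alpha_k\|p_k\|)$ and then invoke $\alpha_k\|p_k\|\to 0$ from Lemma~\ref{lemmacgpm}. For this to force $\|G_k\|\to 0$ one needs $\|p_k\|$ bounded, which for GMOPCGM is supplied by Lemma~\ref{lemm4:gmopcgm} but for GCGPM is \emph{not} established anywhere in the paper. You correctly identify this gap and sketch how to close it (bounding $|\theta_k^{GCGM}|$ and $|a_k|\,\|w_{k-1}\|$ via the safeguard $r_k\geq 1$ and Lipschitz continuity). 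The paper simply writes ``Taking the limits in $k$'' without addressing it, so on this point your proposal is actually more careful than the published proof.
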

\begin{proof}
We have two cases to consider.
\begin{enumerate}
    \item Case 1\\
    Suppose $\lim_{k\rightarrow \infty}\inf\|p_k\|=0$. Now when we apply \eqref{eq:cgpmsuffientcondition}, it means there is $\Omega \in \R_+$ such that  
    \[
    \|G_k\|\leq \Omega\|p_k\|, ~\forall ~k
    \]
    and taking the limits concludes \eqref{cgcc}.
    \item Case II\\
    Let $\lim_{k\rightarrow \infty}\inf\|p_k\|\neq 0$.  Applying the line search in \eqref{eq:backtrack2} there is $\Tilde{\alpha}_k$ (that is if $\alpha_k$ is the optimal step-size that satisfies the line search, then $\Tilde{\alpha}_k=\frac{\alpha_k}{\rho}$ violets the line search) such that 
    \begin{equation}
    \label{eq:violetscgpm}
    -G(x_k+\Tilde{\alpha}_k p_k)^Tp_k< \zeta\Tilde{\alpha}_k||p_k||^2\Pi_{[\zeta_1, \zeta_2]}(\|G(x_k + \Tilde{\alpha}_k p_k)\|)\leq \Tilde{\alpha}_k\zeta||p_k||^2\zeta_2
    \end{equation}
    But using sufficient descent condition \eqref{cgcc}, Triangle inequality, Cauchy-Schwartz inequality and Lipschitz continuity of $G$, we have
    \[
    \alpha_{min}[1-\frac{(1+\tau)^2}{4\alpha_{min}^2}] ||G_k||^2 \leq -G_k^Tp_k=(G(x_k+\Tilde{\alpha}_kp_k)-G_k)^Tp_k-G(x_k+\Tilde{\alpha}_kp_k)^Tp_k,
    \]
     \[
    \alpha_{min}[1-\frac{(1+\tau)^2}{4\alpha_{min}^2}] ||G_k||^2 \leq L\Tilde{\alpha}_k||p_k||^2+\zeta\Tilde{\alpha}_k||p_k||^2\zeta_2,
    \]
     \[
    \alpha_{min}[1-\frac{(1+\tau)^2}{4\alpha_{min}^2}] ||G_k||^2 \leq \Tilde{\alpha}_k||p_k||^2\left(L+\zeta\zeta_2\right).\]
    We arrive at
    \[
    \frac{\rho [4\alpha_{min}^2-(1+\tau)^2] }{4||p_k||\alpha_{min}(L+\zeta\zeta_2 )}\|G_k\|\leq \alpha_k||p_k||.
    \]
    Consequently, we have
    \[
     ||G_k||^2\leq \alpha_k||p_k||\frac{4\alpha_{min}||p_k||(L+\zeta\zeta_2) }{\rho[4\alpha_{min}^2-(1+\tau)^2]}.
    \]
    Taking the limits in $k$ implies that 
    \[
    \lim_{k\rightarrow \infty}\inf ||G_k||=0.
    \]
    Hence global convergence. This completes the proof.
\end{enumerate} 
\end{proof}
}
\section{Numerical Experiments}
\label{testing}
{In this section, we studied and compared the performance of the GCGPM and GMOPCGM with the other three methods. That is STTDFPM [\cite{ibrahim2023two}, Algorithm 1], MOPCGM [\cite{sabi2023modified}, Algorithm 2.1], and CGPM [\cite{zheng2020conjugate}, Algorithm 2.1] without changing their line searches or parameters.\\  The comparison was conducted by considering the number of iterations, CPU time, and function evaluation number.\\ The 
 Algorithms were applied to Julia and tested with the same initial values.

 % \begin{align}
 %      \begin{multlined}[t]
 %        x_0^1=(0,\cdots, 0)^T,  x_0^2=(0.2, \cdots, 0.2)^T, x_0^3=(0.4, \cdots, 0.4)^T ,  x_0^4=(0.6, \cdots, 0.6)^T \\x_0^5=(0.8, \cdots, 0.8)^T,  x_0^6=(1, \cdots, 1)^T, x_0^7=(1.1, \cdots, 1.1)^T
 %        x_0^8=(8, 8 \cdots, 8)^T\\
 %        %x_0^9=(1,\frac{1}{2}, \frac{1}{3}, \cdots, \frac{1}{n})^T,  x_0^{10}=(0,\frac{1}{n}, \frac{2}{n} \cdots, \frac{n-1}{n} )^T, x_0^{11}=(\frac{1}{n}, \cdots, \frac{1}{n})^T, x_0^{12}=(\frac{1}{2}, \frac{1}{2^2} \cdots, \frac{1}{2^n})^T
 %      \end{multlined}
 % \end{align}

 The number of iterations, function evaluations, and CPU time were represented by IT, FE, and CPU, respectively, and the norm of $G$ was also recorded to notice the accuracy of the algorithms. 19 test problems from different sources were used. All the experiments were conducted with three distinct dimensions, that is $10^3$, $10^4$, and $5\times 10^4$. 14 different initial points were used for each dimension in the experiment.\\
 We carefully selected the best parameters that seemed to suit each proposed algorithm for better performance.

    \begin{figure}[h] % Place image here
    \centering
    \includegraphics[width=0.8\textwidth]{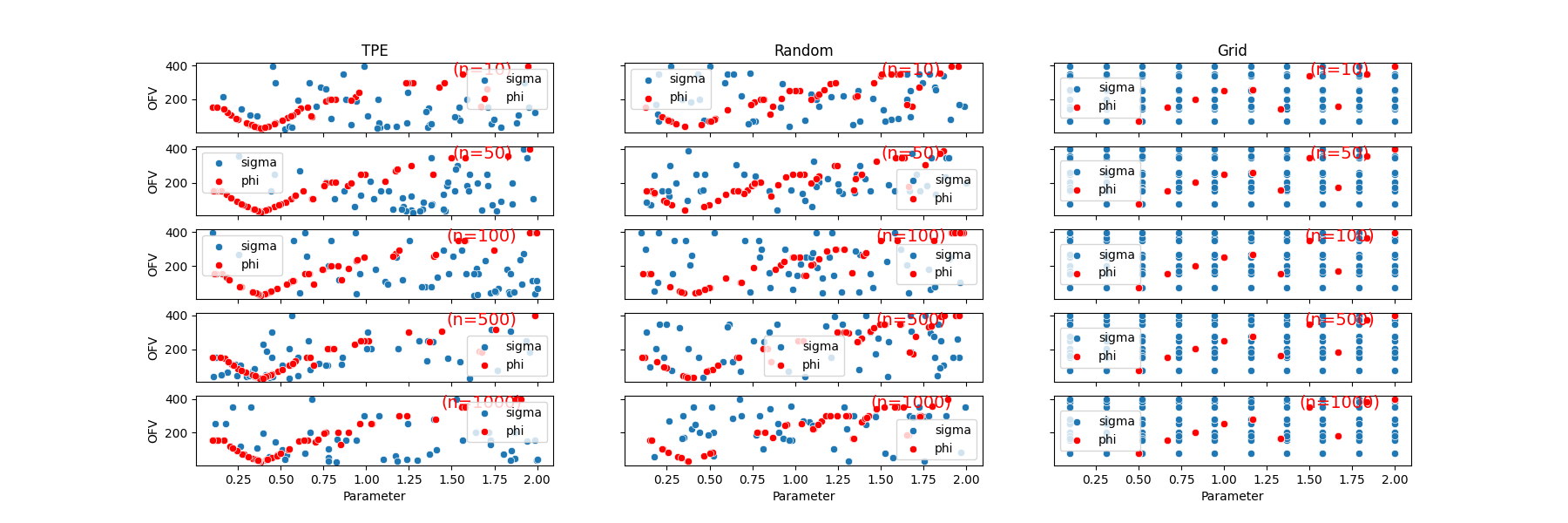} % Adjust width as needed
    \caption{Scatter diagrams for parameter values for GMOPCGM}
    \label{fig:scater diagram1} % Reference label
\end{figure}

    \begin{figure}[h] % Place image here
    \centering
    \includegraphics[width=0.8\textwidth]{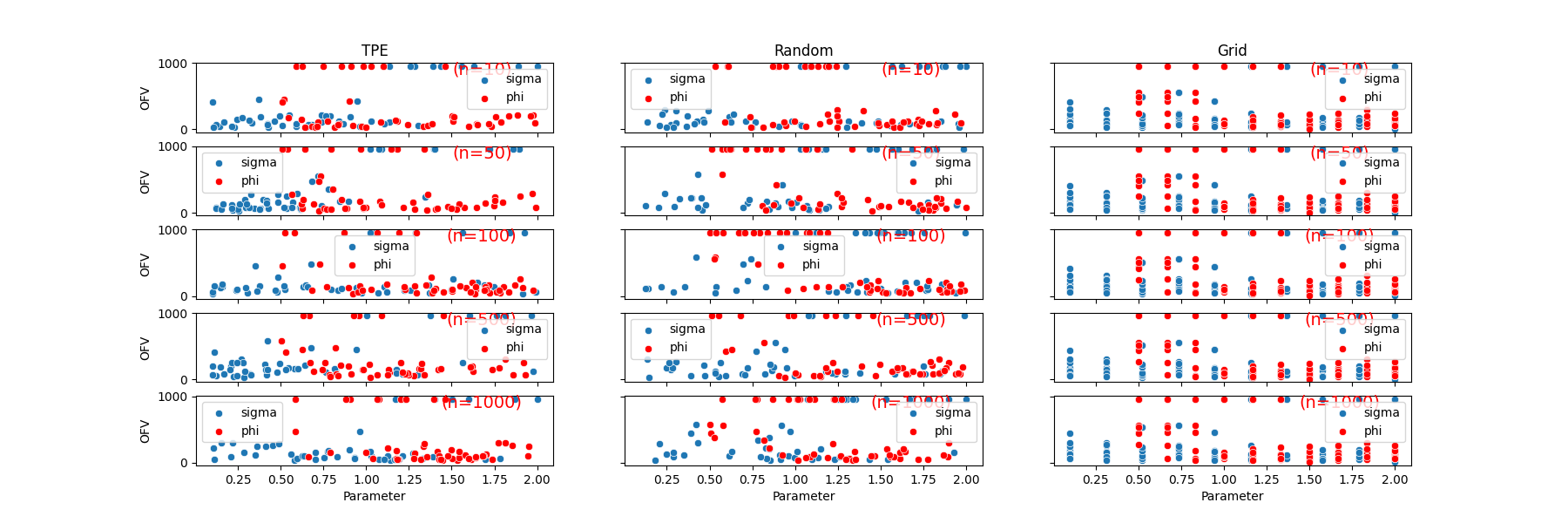} % Adjust width as needed
    \caption{Scatter diagrams for parameter values for GCGPM}
    \label{fig:scater diagram2} % Reference label
\end{figure}
% Figures \ref{fig:scater diagram1} and \ref{fig:scater diagram2} show how the patterns of $\tau=\sigma$  and $\lambda_o=\phi$ for both GMOPCGM and GCGPM. We studied these two parameters and realized the following.
% \begin{enumerate}
%     \item GMOPCGM
%     \begin{enumerate}
%         \item[(i)]$\tau$ did not have much impact on the algorithm's performance. Due to this, we selected $\tau=1.0$
%         \item[(ii)] The performance of the algorithm improved when $\lambda_o=\phi$ was smaller than $2$. Because of this, we decided $\lambda_o$ to  take the values in  $(0, 2]$   
%     \end{enumerate}
%     \item GCGPM
%     \begin{enumerate}
%         \item[(i)] $\tau$ significantly affected the  performance of the algorithm when it was near zero than any other for almost all the test problems. This led us to choose $\tau=0.001$.
%         \item[(ii)] The performance of the algorithm further improved when $\lambda_o=\phi$ was smaller than $2$ but more than $0.5$.due to  this, we decided $\lambda_o$ to  take the values in $(0.5, 2]$
%     \end{enumerate}
% \end{enumerate}
In order to study the effect of parameters $\tau$ and $\phi$, we utilized machine learning-based hyperparameter selection approaches. Specifically, we have utilized the Bayesian Optimization (BO) technique to explore and exploit the parameter space. The key idea in BO is to learn a surrogate model that captures the optimization approach. We have considered a Tree-structured Parzen Estimator (TPE) approach, where the kernel density is estimated to learn dynamics between the parameter space and objective function. In addition to that, we have used non-surrogate approaches like Random Sampling (RS) and Grid Search (GS) to identify the values of the two parameters to explore and evaluate.
 
In all three approaches, both parameters are searched in the following intervals: $[0.1,10]$ for GMOPCGM and GCGPM. Moreover, we have performed 50 iterations of the two-parameter evaluations, where each iteration involves a combination of the two parameters. The study was carried out on different objective functions. Within each objective function, over multiple dimensions, we analyzed the relationship between the objective function value and the parameter values.   
 
For example, in Figure \ref{fig:scater diagram1}, results of the study on F14 using GMOPCGM are depicted. The three columns of scatter plots correspond to TPE, random, and grid search-based sampling, respectively. In a given column, the five scatter plots corresponding to $n=10,50, 100, 500, 1000$, respectively, are illustrated. Similarly, in Figure \ref{fig:scater diagram2}, results of the study on F15 using GCGPM are presented. Based on similar studies conducted over different functions w.r.t GMOPCGM \& GCGPM, we have observed the following:
\begin{enumerate}
	\item GMOPCGM
	\begin{enumerate}
		\item[(i)]$\tau$ did not have much impact on the algorithm's performance. Due to this, we selected $\tau=1.0$
		\item[(ii)] In almost all the cases, the performance of the algorithm improved when $\lambda_o=\phi$ was around $1.5$. Because of this, we decided $\lambda_o$ to take the values in $(0, 2)$.
	\end{enumerate}
	\item GCGPM
	\begin{enumerate}
		\item[(i)] $\tau$ significantly improved the algorithm's performance when it was near zero for almost all the test problems. This led us to choose $\tau=0.001$.
		\item[(ii)] The performance of the algorithm was further improved when $\lambda_o=\phi$ was greater than $1$. Due to this, we decided $\lambda_o$ to take the values in $(0.5, 2]$.
	\end{enumerate}
\end{enumerate}
 For GCGPM, we selected the following parameters
$\tau=0.001$, $\eta=0.6$, $\lambda_o=1.0$, $\rho=0.5$, $\zeta=0.1$, $\zeta_1=1.0$, $\zeta_2=1.0$, $\alpha_{min}=0.55$, $\alpha_{max}=4.9$, $\gamma=1.8$, 
$\gamma_1=1.1$, $\gamma_2=1.7$,   $\gamma_3=1.05$, $\gamma_4=1.05$\\
 For GMOPCGM, we selected the following parameters\\
 $\tau=1.0$, $\rho=0.8$,  $\beta=0.5$,  $\zeta=0.0001$, $\alpha_{min}=0.1$, $\alpha_{max}=2.0$, $\lambda_o=1.0$, 
 $\gamma=1.1$, $\gamma_2=1.8$, $\gamma_3=1.0$, $\gamma_4=1.0$, $\zeta_1=1.0$, $\zeta_2=1.0$\\
 The following are the initial points used for the experiments;\\
 $0=(0,\cdots, 0)^T$, $0.2=(8.2, \cdots, 8.2)^T$, $0.4=(\frac{2}{5},\cdots, \frac{2}{5})^T$, $0.5=(\frac{1}{2}, \cdots, \frac{1}{2})^T$, $0.6=(\frac{3}{5}, \cdots, \frac{3}{5})^T$, $0.8=(\frac{4}{5}, \cdots, \frac{4}{5})^T$, $1.0=(1.0, \cdots, 1.0)^T$, $1.1=(\frac{11}{10}, \cdots, \frac{11}{10})^T$, $1-1/m=(1-\frac{1}{m}, \cdots, 1-\frac{1}{m})^T$, $1/m=(1, \frac{1}{2}, \frac{1}{3}, \cdots, \frac{1}{m})^T$, $(k-1)/m=(0, \frac{1}{m}, \frac{2}{m}, \cdots, \frac{m-1}{m})^T$, $1/m=(\frac{1}{m}, \frac{1}{m}, \cdots, \frac{1}{m})^T$, $1/3^k=(\frac{1}{3}, \frac{1}{3^2}, \cdots, \frac{1}{3^m})^T$, $k/m=(\frac{1}{m}, \frac{2}{m}, \cdots, 1)^T$.\\
 All algorithms were terminated if either of the following was met;
 \begin{enumerate}
     \item $\|G(x_k)\|<Tol$ \item $ p_k<0.1 Tol$ \item  $k>2000$
 \end{enumerate}
  $\|G(x_k)\|$  represents the usual norm of $G$ at $x_k$ and $Tol=10^{-11}$. \\
  For a better assessment and comparison of the performance of the various schemes, we employed the  Moré and Dolan performance profile in ~\cite{dolan2002benchmarking}. The performance profile of the three new algorithms and their counterparts are represented in Figures \ref{fig:functionevals}-\ref{fig:time}. As in the figures, the vertical axes depict the chances that a certain solver outperforms the rest of the competing algorithms.
\newpage
    \begin{figure}[h] % Place image here
    \centering
    \includegraphics[width=0.8\textwidth]{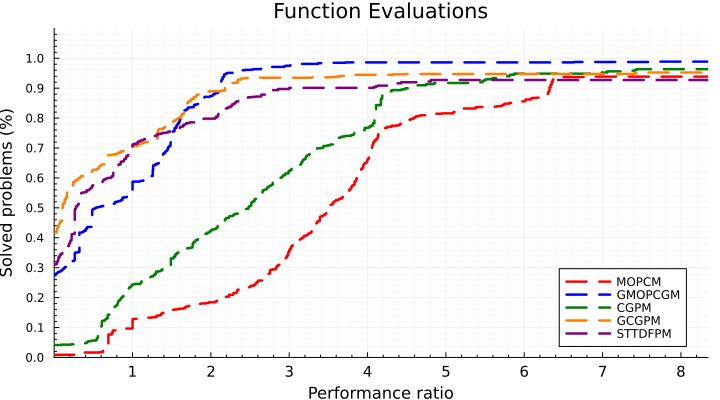} % Adjust width as needed
    \caption{Profile of function evaluations}
    \label{fig:functionevals} % Reference label
\end{figure}

    \begin{figure}[h] % Place image here
    \centering
    \includegraphics[width=0.8\textwidth]{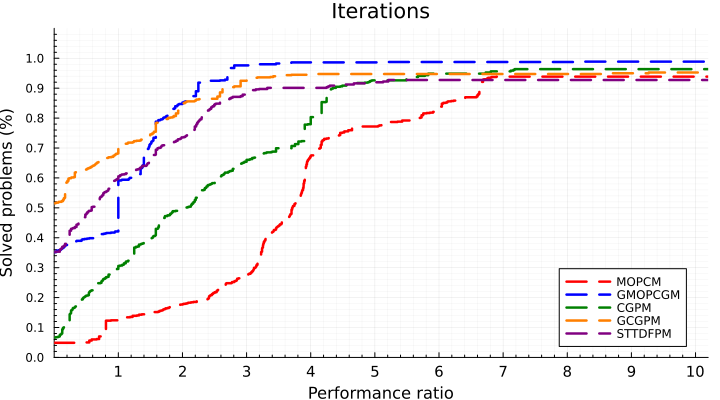} % Adjust width as needed
    \caption{Profile of iterations}
    \label{fig:iterations} % Reference label
\end{figure}
\newpage
    \begin{figure}[h] % Place image here
    \centering
    \includegraphics[width=0.8\textwidth]{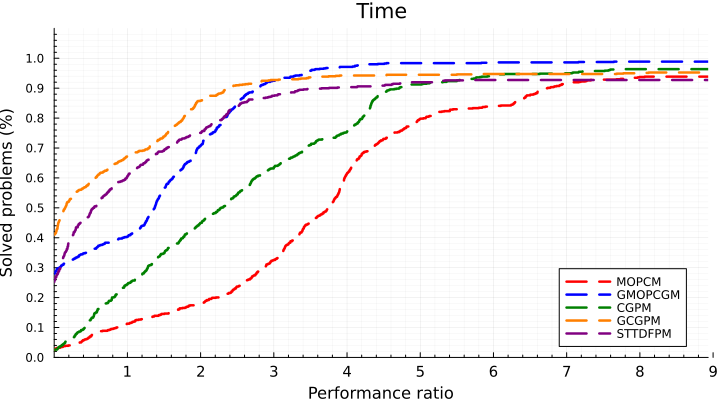} % Adjust width as needed
    \caption{Profile of time}
    \label{fig:time} % Reference label
\end{figure}
From Figures \ref{fig:functionevals}-\ref{fig:time}, it was observed that GCGPM outperformed its counterparts where it won by  $42\%$ $51\%$, and $41\%$ as depicted in Figures \ref{fig:functionevals}, \ref{fig:iterations} and \ref{fig:time} respectively. This is because the four methods, GCGPM, STTDFPM, and GMOPCGM, share the same properties, so this could likely cause a small deviation in their performance. This can be witnessed from the figures. However, There was a great improvement of the two generalized methods GCGPM and GMOPCGM compared to CGPM and MOPCGM respectively. In addition the GMOPCGM and STTDFPM were to close to each other in terms of Iterations and CPU time though the later slightly lagged behind in function evaluations. \\
The following are the problems used in the experiment.\\
{\bf Problem 5.1}(Problem 4.1 in ~\cite{sabi2020two}) $G(x)$ is \\
 $G(x_i) = 2x_i - \sin x_i, ~~i = 1, 2,\cdots, n$ and $\Gamma = [-2, \infty].$\\
{\bf Problem 5.2}(Problem 10 in ~\cite{la2006spectral}) \\
 $G(x_i) = \log(|x_i| + 1) - \frac{x_i}{n}, ~~
 , i = 1, 2, \cdots, n.$ and $\Gamma = \R.$\\
 {\bf Problem 5.3}(Problem 4.1 in\cite{zheng2020conjugate})\\
 $G(x_i) = \exp(x_i) - 1, ~~
 , i = 1, 2, \cdots, n.$ and $\Gamma = \R.$\\
 %{\bf Problem 5.3}\cite{la2006spectral} The general interpretation $G(x)$ defined as\\
 %$G(x_i) = \frac{i}{10}(1-x_i^2-\exp(x_i^2)), ~~
 %, i = 1, 2, \cdots, n-1.\\
 %G(x_n)=\frac{n}{10}(1-\exp(x_n^2))$\\
 {\bf Problem 5.4}(Problem 4.5 ~\cite{sabi2020two} )The general interpretation $G(x)$ defined as\\
 $G(x_i) = 4x_i+(x_{i+1}-2x_i)-\frac{x_{i-1}^2}{3}, ~~
 , i = 1, 2, \cdots, n-1.\\
 G(x_n)=4x_n+(x_{n-1}-2x_n)-\frac{x_{n-1}^2}{3}$ \\and $\Gamma = \R.$\\
 {\bf Problem 4.5}(Problem 4.4 in \cite{zheng2020conjugate}) Exponential problem $G(x)$ defined as\\
 $G(x_1) = x_1-\exp{\cos(\frac{x_1+x_2}{n+1})},\\
 G(x_i) = x_i-\exp{\cos(\frac{x_{i-1}+x_i+x_{i+1}}{n+1})}, ~~ i = 2, \cdots, n-1$.\\
 $ G(x_n) = x_n-\exp{\cos(\frac{x_{n-1}+x_n}{n+1})}$\\
 and $\Gamma =\R$\\
{\bf Problem 5.6}(Problem 4.4~\cite{zheng2020conjugate}) Exponential problem $G(x)$ defined as\\
 $G(x_1) = x_1+\sin(x_1)-1,\\
 G(x_i) = -x_{i-1}+2x_i+\sin(x_i)-1,~~ i = 2, \cdots, n-1.\\
 G(x_n) = x_n+\sin(x_n)-1$\\
 {\bf Problem 5.7}(Problem 19 in ~\cite{la2006spectral})Zero Jacobian function $G(x)$ defined as\\$
 G(x_1)=\sum_{j=1}^nx_j^2\\
 G(x_{i}) =  -2x_1x_i,~~~for ~~i=2,\cdots, n$\\ and 
 $\Gamma =\R$\\
{\bf Problem 5.8}(Problem 14~\cite{song2024efficient}) The general interpretation of $G(x)$ defined as\\$
  G(x_1) =x_1 (x_1^2 + x_2^2) - 1\\
  G(x_i) =  x_i  (x_{i-1}^2 + 2x_i^2 + x_{i+1}^2) - 1,~~~for ~~i=2,\cdots, n-1\\
  G(x_n) = x_n  (x_{n-1}^2 + x_n^2)$\\ and 
 $\Gamma =\R$\\
 {\bf Problem 5.9}(Problem 12~\cite{la2006spectral})Trigexp function  $G(x)$ defined as\\$
  G(x_1) =3x_1^3 + 2x_2 - 5 + \sin(|x_1-x_2|)\sin(|x_1+x_2|)\\
  G(x_i) =-x_{i-1}\exp(x_{i-1}-x_i) + x_1(4+3x_i^3) + 2x_{i+1} - 5 + \sin(|x_i-x_{i+1}|)\sin(|x_i+x_{i+1}|),~~~for ~~i=2,\cdots, n-1\\
  G(x_n) =  -x_{n-1}  \exp(x_{n-1} - x_n) + 4x_n - 3$\\ and 
 $\Gamma =\R$\\
 {\bf Problem 5.10}(Problem 2~\cite{song2024efficient}) Complementary problem
 $G(x)$ defined as\\$
 G(x_i) = (x_i-1)^2 - 1.01, ~~for ~i=1, \cdots, n$\\and 
 $\Gamma =\R$\\
 {\bf Problem 5.11}(Problem 4~\cite{song2024efficient}) Complementary problem and 
 $G(x)$ defined as\\$
 G(x_i) =\frac{i}{n} \exp{x_i}-1 , ~~for ~i=1, \cdots, n$\\and 
 $\Gamma =\R$\\
{\bf Problem 5.12}(Problem 11~\cite{ibrahim2024two})\\
 $ G(x_i) =x_i- \sin(|x_i - 1|) , ~~for ~i=1, \cdots, n$\\and 
 $\Gamma =\R$\\
{\bf Problem 5.13}(Problem 4.5 in ~\cite{waziri2022two})\\$
 G(x_i) =2x_i- \sin(|x_i - 1|) , ~~for ~i=1, \cdots, n$\\and 
 $\Gamma =\R$\\
{\bf Problem 5.14}(Problem 6~\cite{song2024efficient}) 
 \\$
 G(x_i) =x_i- 2\sin(|x_i - 1|) , ~~for ~i=1, \cdots, n$\\and 
 $\Gamma =\R$\\
{\bf Problem 5.15}(Problem 11~\cite{song2024efficient}) 
 \\$
 G(x_i) =(\exp{x_i})^2+3\sin x_i\cos x_i-1 , ~~for ~i=1, \cdots, n$\\and 
 $\Gamma =\R$\\
 {\bf Problem 5.16}(Problem 5~\cite{waziri2020descent}) The  singular function
 $G(x)$ defined as\\
 $G(x_1)=2.5x_1 +x_2 - 1\\
 G(x_i) = x_{i-1} + 2.5x_i + x_{i+1} - 1, ~~for ~i=2, \cdots, n-1\\
 G(x_n)=x_{n-1} + 2.5x_n - 1$\\
 {\bf Problem 5.17}(Problem 1~\cite{zhou2007limited}) $G(x)$ defined as\\$
 G(x_i) =  2x_i - \sin(|x_i|), ~~for ~i=1, \cdots, n$\\
 {\bf Problem 5.18}(Problem 32~\cite{la2006spectral}) Minimal function  $G(x)$ is defined as\\$
 G(x_i) = 0.5\{\log{x_i}+\exp{x_i} -\sqrt{(\log{x_i}-\exp{x_i})^2-10^{-10} }\}, ~~for ~i=1, \cdots, n$\\
 {\bf Problem 5.19}(Problem 4.11~\cite{li2021scaled}) $G(x)$ defined by\\$
 G(x_i) = 2(10^{-5})(x_i-1) + 4x_i \sum_{j=1}^nx_j^2 - x_i, ~~for ~i=1, \cdots, n$\\
{\bf Problem 5.20}(Problem 4.6~\cite{sabi2023modified})  $G(x)$ defined by\\$
 G(x_i) = x_{i}(\cos(x_i-1/n)(\sin x_i-1-(1-x_i)^2-1/n\sum_{j=1}^nx_j), ~~for ~i=1, \cdots, n$

\subsection{Signal Restoration}
{Signal restoration refers to recovering an original signal from degraded observed signals\cite{chen2024manifold}. Signal restoration is a real-world problem that includes but is not limited to dequantization ~\cite{liu2018graph, liu2016random},  denoising ~\cite{dinesh2020point, pang2017graph, zeng20193d}, deblurring~\cite{bai2018graph, chen2021fast}. \\
Signal restoration includes large-scale inverse problems in which a multidimensional signal $x$ is to be obtained from the observation of
data $y$ consisting of signals.  Both the original signal $x$ and the observed $y$ are taken to lie in some real Hilbert spaces which may be independent~\cite{combettes2005signal, stark2013image}.\\
The observed signal is given by 
\begin{equation}\label{eq:sr}
    y=Hx+k
\end{equation}such that $x$ is the signal that we want to recover from $y$, k is called the additive noise, $H$ is a given operator representing the observation process like blurring or degradation, ~\cite{selesnick2009sparse,soussen2011bernoulli}. 
The problem of restoring the original signal $x$ from the observed signal $y$ is an inverse problem ~\cite{espanol2010multilevel, selesnick2009sparse}. 

Different techniques have been developed to resolve \eqref{eq:sr}. For example, Multilevel approach~\cite{espanol2010multilevel}, majorization-minimization\cite{selesnick2009sparse}, Accelerated Projected Gradient Method  APGM\cite{daubechies2008accelerated}  and many more. Therefore \eqref{eq:sr} can be written as
\begin{equation}\label{eq:sp}
    T(x)= ||y-Hx||_2^2+\eta||x||_1
\end{equation}
so to obtain $x$, its through minimizing/possibly solving \eqref{eq:sp} called $l_1$ regularized linear inverse problem or the  penalized least-squares problem.\\
The presence of the $l_1$ term may lead to small components
of $x$ to become exactly zero, thus leading to sparse solutions
\cite{figueiredo2007gradient}.\\
Yin et al in \cite{yin2023family}, evaluated the performance of their technique by obtaining the sparse solution $x$ for 
\begin{equation}\label{eq:23}
    \min_{x\in \R^n} \frac{1}{2}||y-Hx||^2+\eta||x||_1
\end{equation}
and \eqref{eq:23} is assumed to contain under-determined linear systems of equations such that $H\in \R^{m\times n}$ and $y\in \R^m$ and $m<<n$.

Problem \eqref{eq:23} can be redefined by  
\begin{equation}\label{eq:24}
    \min \frac{1}{2}z^T Qz+ d^Tz,~~ z\geq 0
\end{equation}
such that $z=\begin{pmatrix}
\max\{x, 0\}\\ \max\{-x, 0\}
\end{pmatrix}$, $d=\begin{pmatrix}
\eta E_n-H^Ty\\ \eta E_n+H^T y
\end{pmatrix}$, $Q=\begin{pmatrix}
H^TH&-H^TH\\ -H^TH&H^TH
\end{pmatrix}$.\\ From \cite{gao2018efficient, xiao2011non} the function $G$ is monotone and continuous.\\It was also noted in ~\cite{sabi2020two} that $z$ satisfies \eqref{eq:24} if and only if 
it satisfies \eqref{eq:application}
\begin{equation}
    \label{eq:application}
  G(z)=\min\{z,  Qz+d\}=0
\end{equation}
The 'min' is interpreted to be a point-wise minimum and $G$ is monotone and continuous as proved in ~\cite{iusem1997newton}.\\Therefore problem \eqref{eq:application} can be interpreted to be in the form of \eqref{eq:op}. In the experiment, firstly, we defined a sparse signal $x_{original}$ of length $n=2^{12}$ and sparsity $k=2^{9}$, that is $k$ are non-zero elements in the signal that are randomly selected.  \\
A Sensing Matrix $H$ of size $m\times n$ was obtained randomly. Where $m<n$ and in this case we considered $m=2^{11}$.  The Gaussian noise was also determined whose components were produced normal distributions $N(\mu=0, \sigma=0.01)$. The observation $y$ was considered to be the sum of the noise and the product of the sensing matrix and the original signal. 
\newpage

    \begin{figure}[h] % Place image here
    \centering
    \includegraphics[width=0.8\textwidth]{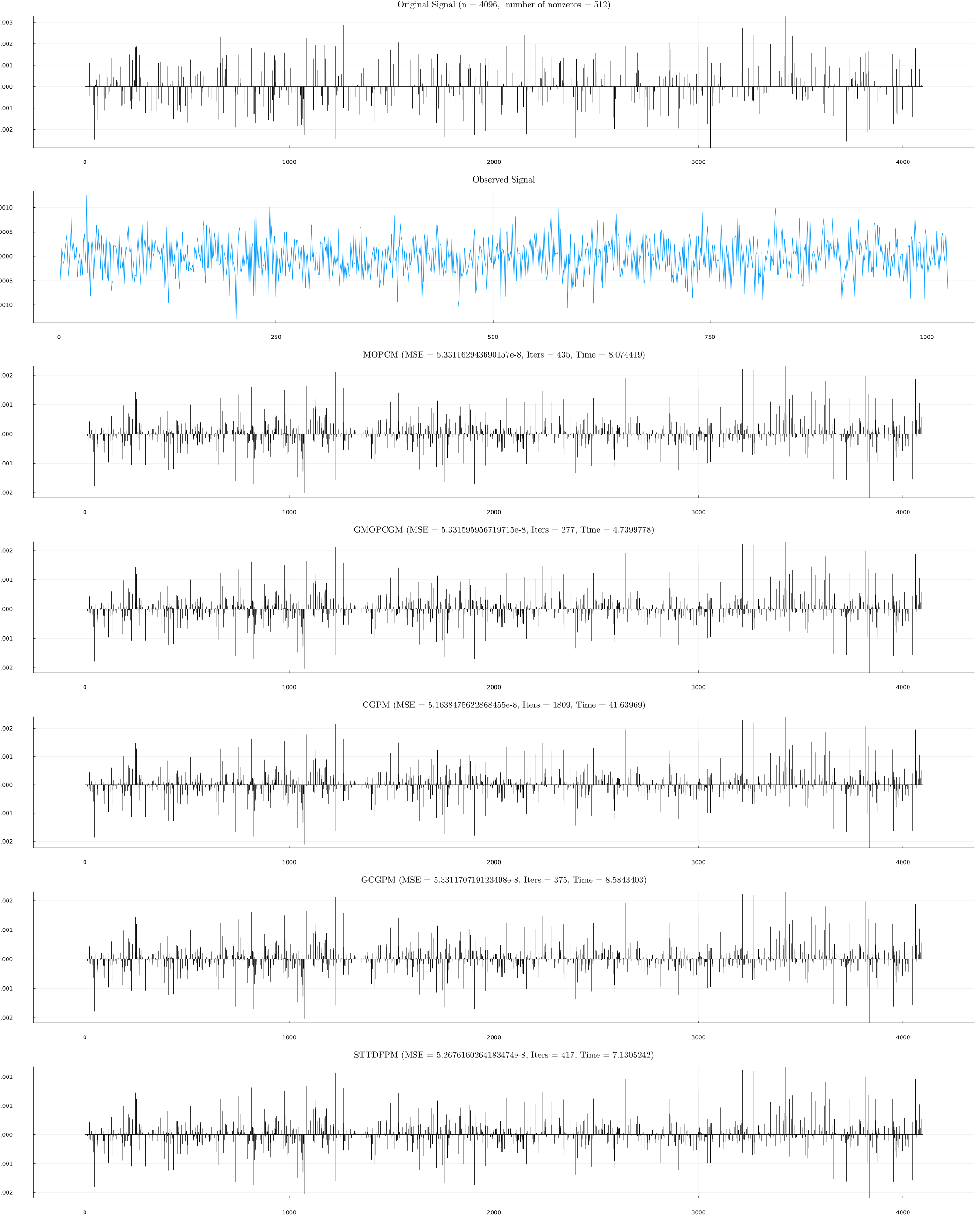} % Adjust width as needed
    \caption{Reconstructed signals}
    \label{fig:reconstructedsignals} % Reference label
\end{figure}

\newpage
In the application of the GCGPM and GMOPCGM in compressed sensing, the iteration process began with the initial point $x_o=H^Ty$ of length $n$, and the experiment was terminated when $\|G(x_k)\|< 10^{-5}$.\\
The parameters for the GCGPM remained the same.\\
For GMOPCGM, we selected the following parameters.\\
 $\tau=1.05$, $\rho=0.8$,  $\beta=0.5$,  $\zeta=0.0001$, $\alpha_{min}=0.1$, $\alpha_{max}=2.0$, $\lambda_o=1.0$, 
 $\gamma=1.1$, $\gamma_2=1.8$, $\gamma_3=0.85$, $\gamma_4=1.0$, $\zeta_1=1.0$, $\zeta_2=1.0$.

To consolidate the applicability of our schemes in recovering sparse signals, we compared them with STTDFPM in ~\cite{ibrahim2023two}, CGPM in ~\cite{zheng2020conjugate} and MOPCGM in ~\cite{sabi2023modified}. The comparison helped us to evaluate how our methods perform best relative to their counterparts. A good performance implies a small mean square error given by,
\[
MSE=\frac{\|x_{original}-x_{recovered}\|}{n}.
\]
So, for fairness to all solvers, we conducted 10 experiments and determined the means for the  Iterations, means for the function, and mean times besides the mean square errors. The results were recorded in table \ref{tab:algorithm_performance} below.
{\small 
\begin{table}[H]
    \centering
    \sisetup{scientific-notation=true, round-mode=places, round-precision=2}
    \begin{tabular}{lcccc}
        \toprule
        \textbf{Algorithm} & \textbf{Iterations} & \textbf{Function Evals } & \textbf{Time} & \textbf{MSE} \\
        \midrule
        MOPCM        & 421.7           & 1267.1 & 7.5477 & \num{5.22894E-8} \\
        GMOPCGM      & \textbf{269.0}  & \textbf{808.8}  & \textbf{4.5182} & \num{5.22928E-8} \\
        CGPM         & 1708.4          & 5468.7 & 39.6073 & \num{5.05665E-8} \\
        GCGPM        & 363.2           & 1089.6 & 8.0243 & \num{5.22875E-8} \\
        STTDFPM      & 398.2           & 831.7  & 6.8776 & \num{5.14822E-8} \\
        \bottomrule
    \end{tabular}
    \caption{Comparison of algorithm performance based on iterations, function evaluations, time, and MSE.}
    \label{tab:algorithm_performance}
\end{table}
}
As we can see from the table, it is clear that CGPM gives a small mean square error. However, GMOPCGM took less time, had few iterations, and had few function evaluations. Figure \ref{fig:reconstructedsignals} shows the quality of the reconstructed signals by the solvers.

}
\section*{Conclusion}
In this paper, we proposed two efficient methods, and GCGPM outperformed the rest of its counterparts. They satisfy the sufficient descent condition. They also converge globally under the Lipschitz continuity of the function.

\subsection{Data availability}
The data used to support the findings of this study are included in the article.
\subsection{Conflicts of Interest}
The authors declare that they have no conflicts of interest.
\newpage
%\printbibliography
%%%%%%%%%%%%%%%%%%%%%%%%%%%%%%%%%%%%%%%%%%%%%%%%%%%%%%%%%%%%%%%%%%%%%%%%%%%%%%%%%%%%%%%%%%%%%%%%%%%%%%%%%%%
%%%%%%%%%%%%%%%%%%%%%%%%%%%%%%%%%%%%%%%%%%%%%%%%%%%%%%%%%%%%%%%%%%%%%%%%%%%%%%%%%%%%%%%%%%%%%%%%%%%%%%%%%%%

%\newpage

%%%%%%%%%%%%%%%%%%%%%%%%%%%%%%%%%%%%%%%%%%%%%%%%%%%%%%%%%%%%%%%%%%%%%%%%%%%%%%%%%%%%%%%%%%%%%%%%%%%%%%%%%%%

\newpage

% \chapter*{Bibliography} % Unnumbered Chapter
\phantomsection % This ensures correct numbering in the table of contents
\addcontentsline{toc}{chapter}{References}%Add Acknowledgements to the table of contents as a clickable link with hyperref package
\label{references}
% \addtocontents{toc}{\contentsline{chapter}{\numberline{}REFERENCES}{\pageref{references}}}\label{references}
% \bibliographystyle{plain}
% \bibliography{library}
% \phantomsection % For hyperref
% \addcontentsline{toc}{chapter}{Bibliography} % Add to TOC
%\bibliographystyle{plain} % Choose your style
%\bibliography{library} % Point to your .bib file
\printbibliography

%%%%%%%%%%%%%%%%%%%%%%%%%%%%%%%%%%%%%%%%%%%%%%%%%%%%%%%%%%%%%%%%%%%%%%%%%%%%%%%%%%%%%%%%%%%%%%%%%%%%%%%%%%%
%%%%%%%%%%%%%%%%%%%%%%%%%%%%%%%%%%%%%%%%%%%%%%%%%%%%%%%%%%%%%%%%%%%%%%%%%%%%%%%%%%%%%%%%%%%%%%%%%%%%%%%%%%%

% \input{results}

\textbf{Declaration of generative AI and AI-assisted technologies in the writing process}

During the preparation of this work, the author(s) used ChatGPT (OpenAI) in order to assist with language editing, improving clarity, and refining the structure of the manuscript. After using this tool, the author(s) reviewed and edited the content as needed and take(s) full responsibility for the content of the publication.
\end{document}